\tikzstyle{none}=[inner sep=0pt]
\tikzstyle{edge}=[draw=black]
\tikzstyle{vertex}=[circle, scale=.5, fill=Black,draw=Black]
\tikzstyle{textbox}=[rectangle,fill=none,draw=none]
\newtheorem{theorem}{Theorem}[section]
\newtheorem{lemma}[theorem]{Lemma}
\newtheorem{proposition}[theorem]{Proposition}
\newtheorem{corollary}[theorem]{Corollary}
\newcommand{\noin}{\noindent}
\newcommand{\qed}{\ \hfill \rule{1ex}{1ex}} 
\newenvironment{proof}{{\noin \bf Proof}: }{\qed \\}
\begin{document}

\title{Hamiltonicity of Bell and Stirling colour graphs}
    \author{S.~Finbow \thanks{Research supported by NSERC.
{\tt sfinbow@stfx.ca}, {\tt gmacgill@uvic.ca}}, $\mbox{G.~MacGillivray}^*$}
    \date{ }
    \maketitle

\begin{abstract}
For a graph $G$ and a positive integer $k$, the $k$-Bell colour graph of $G$
is the graph whose vertices are the partitions of $V$ into at most $k$ independent
sets, with two of these being adjacent if there exists a vertex $x$ such that 
the partitions are identical when restricted to $V - \{x\}$.
The $k$-Stirling Colour graph of $G$ is defined similarly, but for partitions into
exactly $k$ independent sets.

We show that every graph on $n$ vertices, except $K_n$ and $K_n - e$, has a Hamiltonian 
$n$-Bell colour graph, and this result is best possible.
It is also shown that, for $k \geq 4$, the $k$-Stirling colour graph of a tree with at 
least $k+1$ vertices is Hamiltonian, and the 3-Bell colour graph of a tree with at least
3 vertices is Hamiltonian.
\end{abstract}

\bigskip

\noindent {\bf Key words.} Hamilton cycle, reconfiguration problem, combinatorial Gray code, colour graph

\medskip

\noindent {\bf AMS subject classification.} Primary: 05C15.  Secondary: 05C45, 68R10

\section{ Introduction}

For a set $S$ of combinatorial objects, the \emph{reconfiguration graph for $S$}
has the elements of $S$ as its vertices, with two of these being adjacent 
if they differ in some small, specified way.
When this graph is connected, any element of $S$ can be reconfigured into any other
via a sequence of small changes of the specified type.
When it is Hamiltonian, there is a cyclic list that contains all elements of $S$,
and consecutive elements of the list differ by a small change of the 
specified type (a cyclic \emph{combinatorial Gray code} for the objects in question).
The paper by Ito \cite{Ito} and the survey by van den Heuvel \cite{vdH} give a sense 
of the literature and wide variety of reconfiguration problems that have been considered.
The survey by Savage \cite{Savage} gives pointers to the vast literature on combinatorial Gray codes.
In this paper, we are  interested in combinatorial Gray codes for a type of graph colouring.

A considerable amount is known about the \emph{$k$-colour graph} of a graph $G$,  denoted by $\mathcal{C}_k(G)$ (e.g., see \cite{vdH}).
It has the $k$-colourings of $G$ enumerated by the chromatic polynomial as vertices, 
with two of these being adjacent when they differ in the colour of exactly one vertex.  
This graph is
 connected whenever $k$ is at least one more than the colouring number of $G$ \cite{DFFV},
and is Hamiltonian when $k$ is at least two more than the colouring number of $G$ \cite{CM}.
Necessary, and usually sufficient conditions, on $k$ for the existence of a Hamilton cycle in a $k$-colour graph of  a tree, cycle, complete graph, complete bipartite graph, 2-tree, or complete multipartite graph have been found \cite{Bard, CS, CCMS, CM}.

Much less is known about the {\em canonical $k$-colouring of a graph $G$ with respect to a vertex ordering $\pi = x_1, x_2, \ldots, x_n$}.
A $k$-colouring of $G$ is  \emph{canonical} if it has the property that, for every $c \geq 1$, if $x_j$ is assigned colour $c$ then each of the colours $1, 2, \ldots, c-1$ has been assigned some vertex that precedes $x_j$ in $\pi$.  The {\em canonical $k$-colour graph of $G$ with respect to $\pi$}, denoted $\mathrm{Can}_k^\pi(G)$, has as vertices the canonical $k$-colourings of $G$ with respect to $\pi$, with two canonical $k$-colourings $c_1$ and $c_2$ being adjacent when they agree on all but one vertex of $G$.   Haas proved that for any tree $T$ with at least 4 vertices there exists a vertex ordering $\pi$ such that
$\mathrm{Can}_k^\pi(G)$ has a Hamilton cycle for all $k \geq 3$ \cite{Haas}.  By contrast, a  canonical $k$-colouring graph of a complete multipartite graph almost never has a Hamilton path or cycle, but   for all $k \geq 3$ there exists a vertex ordering $\pi$ such that $\mathrm{Can}_k^\pi(K_{m,n})$ has a Hamilton path for $m,n \geq 2$ \cite{HM}.

%
%

For a given graph $G$ and positive integer $k$, the {\em $k$-Bell colour graph of $G$}, denoted $\mathcal{B}_k(G)$, has as vertices the set of partitions of $V(G)$ into $k$ or fewer independent sets, with different partitions $p_1$ and $p_2$ being adjacent if there is a vertex $x \in V(G)$ such that the restrictions of $p_1$ and $p_2$ to $V(G)-\{x\}$ are equal. 
The  {\em $k$-Stirling colour graph}, $\mathcal{S}_k(G)$,  is defined similarly, but for partitions of $V(G)$ into exactly $k$ independent sets.  Observe that
$\mathcal{S}_k(G)$ is the  subgraph of $\mathcal{B}_k(G)$  induced by $V(\mathcal{B}_k(G))-V(\mathcal{B}_{k-1}(G))$. 

The name $k$-Bell colour graph of the graph $G$ comes from the $k$-Bell number of $G$, which is  the number of partitions of $V(G)$ into at most $k$ independent sets.
Similarly, the $k$-Stirling number of $G$ is the number of partitions of $V(G)$ into exactly $k$ independent sets.  
Notice that the  Bell number,  $B(n,k)$, is the number of partitions of the vertices of $\overline{K}_n$, into at most $k$ independent sets, and the Stirling number of the second kind, $S(n, k)$,  is the number of partitions of the vertices of $\overline{K}_n$ into exactly $k$ independent sets.
Refer to \cite{DP, GT} for pointers to the literature on Bell and Stirling numbers of graphs.

The $k$-Bell colour graph has been considered by Haas under the name the {\em isomorphic colour graph} \cite{Haas}.  This name arises from defining two vertices of $C_k(G)$ to be {\em equivalent} (or, saying that these colourings are isomorphic) if they give rise to the same partition of the vertex set into independent sets.  
The vertices of $\mathcal{B}_k(G)$ correspond to these equivalence classes, with  $[x]$ and $[y]$ being adjacent if and only if some member of $[x]$ is adjacent to a member of $[y]$ in $\mathcal{C}_k(G)$.
The graph $\mathrm{Can}_k^\pi(G)$ is the subgraph of $C_k(G)$ induced by the set of lexicographically least representatives with respect to $\pi$ from the equivalence classes. 
It follows that, for any vertex ordering $\pi$, the graph $\mathrm{Can}_k^\pi(G)$ is a spanning subgraph of $\mathcal{B}_k(G)$.  
Hence, if there exists a vertex ordering $\pi$ such that $\mathrm{Can}_k^\pi(G)$ is connected, or Hamiltonian, then the same is true of $\mathcal{B}_k(G)$.

In Section~2  we give some of the basic properties of  $k$-Bell colour graphs and in Section~3 we show that, for every graph which is neither a complete graph nor a complete graph less an edge, there is a threshold $k_0$, so that  for every $k\ge k_0$, the $k$-Bell colour graph of $G$ is Hamiltonian. In Section 4, we show that for $k\ge 4$, the $k$-Stirling colour graph of any tree with at least $k+1$ vertices is Hamiltonian, but the 3-Stirling colour graph of a star with an odd number of vertices is not Hamiltonian.  
Since we are viewing colourings as partitions of the vertex set into independent sets, through the remaining sections  we will use the terms ``colouring'' and ``partition'' interchangeably.

\section{Known results and basic properties}

In this section we explore properties of the $k$-Bell colour graph of $G$.  While this graph is defined for every non-negative integer $k$, for every $k \geq |V(G)|$ we have $\mathcal{B}_k(G) \cong \mathcal{B}_{|V(G)|}(G)$.  By contrast, the $k$-colour graph of $G$ is  different for every positive integer $k \geq \chi(G)$.  

 Haas proved that, for any integer $k \geq 3$ and any tree $T$ with at least 4 vertices,
 there is an enumeration $\pi$ of  $V(T)$ 
 such that $\mathrm{Can}_k^\pi(T)$ is Hamiltonian.  Hence we have the following.
 
 \begin{theorem}  {\rm \cite{Haas}}
For any tree with at least four vertices, $\mathcal{B}_k(T)$  is Hamiltonian for every $k\ge 3$. 
\label{I3}\label{I3K1n}
\end{theorem}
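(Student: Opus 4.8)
The plan is to obtain this statement as an immediate consequence of Haas's theorem on canonical colour graphs together with the structural observation already recorded in this section. Recall that Haas established that for every tree $T$ with at least four vertices and every $k \geq 3$, there is an enumeration $\pi$ of $V(T)$ for which $\mathrm{Can}_k^\pi(T)$ is Hamiltonian. The task is therefore only to transfer a Hamilton cycle from $\mathrm{Can}_k^\pi(T)$ up to $\mathcal{B}_k(T)$, and no new combinatorial construction is needed.

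First I would invoke the fact, noted above, that for any vertex ordering $\pi$ the graph $\mathrm{Can}_k^\pi(G)$ is a spanning subgraph of $\mathcal{B}_k(G)$. This rests on the observation that the canonical $k$-colourings are exactly the lexicographically least representatives, with respect to $\pi$, of the equivalence classes of colourings; these classes are precisely the partitions of $V(G)$ into at most $k$ independent sets, which are the vertices of $\mathcal{B}_k(G)$. Thus the vertex sets are in bijection, and since any single-vertex recolouring edge between two canonical colourings corresponds to two partitions that agree off one vertex, every edge of $\mathrm{Can}_k^\pi(G)$ is also an edge of $\mathcal{B}_k(G)$. The one point deserving a line of care is confirming that this correspondence is a genuine identification of vertex sets, not merely an injection: distinct classes have distinct canonical representatives, and every partition into at most $k$ independent sets arises from some canonical colouring, both following directly from the definition of a canonical colouring.

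With the spanning-subgraph containment in hand the argument closes in a sentence. Fix $\pi$ so that $\mathrm{Can}_k^\pi(T)$ is Hamiltonian, as supplied by Haas's theorem for the given $T$ and $k \geq 3$, and let $C$ be a Hamilton cycle of $\mathrm{Can}_k^\pi(T)$. Because $\mathrm{Can}_k^\pi(T)$ is a spanning subgraph of $\mathcal{B}_k(T)$, the cycle $C$ visits every vertex of $\mathcal{B}_k(T)$ and uses only edges of $\mathcal{B}_k(T)$, so $C$ is itself a Hamilton cycle of $\mathcal{B}_k(T)$. Since the statement reduces to this containment, there is no real obstacle to overcome here; the entire substance lies in Haas's construction of the ordering $\pi$, which is cited rather than reproved, and the only verification that belongs to this proof is the spanning-subgraph claim described above.
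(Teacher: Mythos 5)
Your proposal is correct and follows essentially the same route as the paper: the paper likewise treats this theorem as an immediate consequence of Haas's result that some ordering $\pi$ makes $\mathrm{Can}_k^\pi(T)$ Hamiltonian, combined with the observation (stated in its introduction) that $\mathrm{Can}_k^\pi(G)$ is a spanning subgraph of $\mathcal{B}_k(G)$. The only difference is that you verify the vertex-set identification explicitly, which the paper leaves implicit; this is a reasonable bit of added care rather than a divergence in method.
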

 
 \begin{corollary}
For any tree with at least three vertices, $\mathcal{S}_3(T)$  has a Hamilton path. 
\label{HPinX3}
\end{corollary}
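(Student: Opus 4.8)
The plan is to obtain the Hamilton path in $\mathcal{S}_3(T)$ directly from the Hamilton cycle in $\mathcal{B}_3(T)$ provided by Theorem~\ref{I3}, using the fact recorded above that $\mathcal{S}_3(T)$ is the subgraph of $\mathcal{B}_3(T)$ induced by $V(\mathcal{B}_3(T)) - V(\mathcal{B}_2(T))$. The whole argument therefore hinges on determining exactly which vertices of $\mathcal{B}_3(T)$ are deleted to form $\mathcal{S}_3(T)$, i.e.\ on counting the vertices of $\mathcal{B}_2(T)$.

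First I would observe that, since $T$ is a tree, it is connected and bipartite, and any tree with at least two vertices has an edge. Consequently $V(T)$ is not independent, so there is no partition into a single independent set; and connectedness forces the bipartition of $T$ to be its only partition into exactly two independent sets. Hence $\mathcal{B}_2(T)$ consists of a single vertex, namely the bipartition $\beta$, and by the identity above we have $\mathcal{S}_3(T) = \mathcal{B}_3(T) - \beta$.

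Now suppose $T$ has at least four vertices. Theorem~\ref{I3} supplies a Hamilton cycle $C$ in $\mathcal{B}_3(T)$. Deleting the single vertex $\beta$ from $C$ breaks the cycle into a path that visits every remaining vertex; since its edges are edges of $\mathcal{B}_3(T)$ joining vertices of $\mathcal{S}_3(T)$, they survive in the induced subgraph, so this path is a Hamilton path of $\mathcal{B}_3(T) - \beta = \mathcal{S}_3(T)$, as required. It then remains to dispose of the boundary case of a tree on exactly three vertices, which Theorem~\ref{I3} does not cover: the only such tree is $P_3$, and a partition of its three vertices into exactly three independent sets must place one vertex in each block, so $\mathcal{S}_3(P_3)$ is a single vertex and trivially has a Hamilton path.

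I do not anticipate a genuine obstacle here, as the corollary is essentially immediate once the right structural observation is in place; the only points requiring care are the verification that $\mathcal{B}_2(T)$ contributes exactly one vertex (so that removing it from a Hamilton cycle yields a Hamilton path rather than a disconnected collection of paths) and the separate treatment of the three-vertex tree that falls outside the hypothesis of Theorem~\ref{I3}.
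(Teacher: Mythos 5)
Your proposal is correct and follows essentially the same route as the paper: handle $P_3$ separately as $\mathcal{S}_3(P_3) \cong K_1$, then for trees on at least four vertices take the Hamilton cycle in $\mathcal{B}_3(T)$ from Theorem~\ref{I3} and delete the unique $2$-colouring to obtain a Hamilton path in $\mathcal{S}_3(T)$. Your explicit justification that $\mathcal{B}_2(T)$ consists of a single vertex (connectedness gives uniqueness of the bipartition, and an edge rules out a one-cell partition) is a point the paper leaves implicit, but it is the same argument.
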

\begin{proof}
The only tree with 3 vertices is $P_3$, and  $\mathcal{S}_3(P_3) \cong K_1$.
Suppose $T$ is a tree with at least 4 vertices.
By Theorem \ref{I3}, $\mathcal{B}_3(T)$ is Hamiltonian. 
The graph $\mathcal{S}_3(T)$ is obtained by deleting the unique 2-colouring of $T$ from  $\mathcal{B}_3(T)$. 
Deleting a vertex from a graph with a Hamilton cycle gives a graph with a Hamilton path.
\end{proof}


The {\em colouring number of $G$}, denoted {\em $\mathit{col}(G)$}, is the smallest integer $c$ for which there exists an ordering of the vertices  such that for all $i$, the degree of the $i$th vertex in the subgraph induced by the first $i$ vertices in the ordering is less than $c$. 

\begin{proposition} {\rm\cite{Haas}}
If $\mathcal{C}_k(G)$ is connected, then $\mathcal{B}_k(G)$ is connected.
\label{P2.3}
\end{proposition}


Since $\mathcal{C}_k$ is connected when $k \geq \mathrm{col}(G)+1$
\cite{DFFV} (also see \cite{CHJ}), 
the following is now an immediate consequence of Proposition \ref{P2.3}.  

%

\begin{corollary}
If $k \geq \mathrm{col}(G)+1$, then $\mathcal{B}_k(G)$ is connected.
\label{I_kConn}
\end{corollary}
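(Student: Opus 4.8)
The plan is to obtain the conclusion by chaining two facts, one cited and one established just above. The hypothesis $k \geq \mathrm{col}(G)+1$ is precisely the threshold guaranteeing connectivity of the ordinary $k$-colour graph, so I would first invoke the result of \cite{DFFV} (also \cite{CHJ}) to conclude that $\mathcal{C}_k(G)$ is connected under this assumption. I would then apply Proposition \ref{P2.3}, which transfers connectivity from $\mathcal{C}_k(G)$ to $\mathcal{B}_k(G)$, to reach the desired conclusion.

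Concretely, the single logical step is: assume $k \geq \mathrm{col}(G)+1$; by \cite{DFFV} the graph $\mathcal{C}_k(G)$ is connected; hence by Proposition \ref{P2.3} the graph $\mathcal{B}_k(G)$ is connected. No further argument is needed, so the statement is genuinely a corollary rather than a result requiring independent work.

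Because both ingredients are available to us — the colouring-number bound on $\mathcal{C}_k(G)$ as an external citation, and Proposition \ref{P2.3} as an already-established structural fact — there is essentially no obstacle to overcome here. The only point meriting a moment's care is checking that the hypothesis matches the cited threshold exactly (colouring number plus one, rather than chromatic number plus one), but that alignment is immediate from the statements as written. All of the substantive content sits in Proposition \ref{P2.3}, whose proof presumably shows that any recolouring walk in $\mathcal{C}_k(G)$ projects, under the map sending a colouring to its induced partition of $V(G)$ into independent sets, onto a walk in $\mathcal{B}_k(G)$; the corollary merely specialises that transfer to the regime where $\mathcal{C}_k(G)$ is known to be connected.
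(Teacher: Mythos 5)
Your proposal is correct and is exactly the paper's argument: the result of \cite{DFFV} gives connectivity of $\mathcal{C}_k(G)$ when $k \geq \mathrm{col}(G)+1$, and Proposition~\ref{P2.3} transfers this to $\mathcal{B}_k(G)$. The paper presents this as an immediate consequence in precisely the same two-step chain, so there is nothing to add.
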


The graph $L_{n,n} = K_{n,n}-M$, where $M$ is a perfect matching, shows that this bound is best possible.  
It is easy to see that $col(L_{n,n})=n$.  
On the other hand, the $n$-colouring (partition) where each pair of end vertices of the edges in $M$ are assigned the same colour has no neighbours in $\mathcal{B}_n(L_{n,n})$.


\begin{proposition}
Let $k$ be a positive integer, and $H$ be a uniquely $k$-colourable graph. If $G$ and $H$ are disjoint, then $\mathcal{B}_k(H \cup G) \cong\mathcal{C}_k(G)$.
\end{proposition}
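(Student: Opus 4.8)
The plan is to exhibit an explicit bijection between the vertices of $\mathcal{B}_k(H \cup G)$ and those of $\mathcal{C}_k(G)$, and then to check that it preserves adjacency in both directions. First I would analyse the structure of an arbitrary vertex of $\mathcal{B}_k(H \cup G)$, that is, a partition $P = \{P_1, \dots, P_m\}$ of $V(H) \cup V(G)$ into $m \le k$ independent sets. Since $H \cup G$ has no edges joining $H$ to $G$, each $P_i \cap V(H)$ is independent in $H$, so the nonempty sets among them form a partition of $V(H)$ into at most $m \le k$ independent sets. As $H$ is uniquely $k$-colourable we have $\chi(H) = k$, which forces $m = k$ with every $P_i \cap V(H)$ nonempty; unique $k$-colourability then forces $\{P_i \cap V(H) : 1 \le i \le k\}$ to be exactly the unique colour partition $\{Q_1, \dots, Q_k\}$ of $H$.

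The key consequence I would draw is that the parts of $P$ carry a \emph{canonical} labelling: each $P_i$ is tagged by the unique colour class $Q_j$ of $H$ that it contains. Fixing once and for all a bijection between $\{Q_1, \dots, Q_k\}$ and the colour set $\{1, \dots, k\}$, I would define $\Phi(P)$ to be the map $f : V(G) \to \{1, \dots, k\}$ sending each $v \in V(G)$ to the label of the part containing it. Because $P_i \cap V(G)$ is independent in $G$, the map $f$ is a proper $k$-colouring of $G$; conversely, every proper $k$-colouring of $G$ extends uniquely, by appending the fixed colour partition of $H$, to a partition of $V(H \cup G)$ of the above form. Hence $\Phi$ is a bijection from $V(\mathcal{B}_k(H \cup G))$ onto $V(\mathcal{C}_k(G))$.

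Finally I would verify that $\Phi$ preserves adjacency. The decisive structural point is that all $k$ parts are forced to be nonempty, since each contains a nonempty colour class $Q_j$. Consequently the $H$-restriction of every vertex of $\mathcal{B}_k(H \cup G)$ is the same, and no legal move can create or destroy a part; any edge of $\mathcal{B}_k(H \cup G)$ must therefore relocate a single vertex $x$, which is necessarily in $V(G)$. Relocating $x$ between two labelled parts alters exactly the value $f(x)$ and nothing else, so the restrictions to $V(H \cup G) - \{x\}$ agree precisely when $\Phi(P_1)$ and $\Phi(P_2)$ differ only at $x$. This yields $P_1 \sim P_2$ in $\mathcal{B}_k(H \cup G)$ if and only if $\Phi(P_1) \sim \Phi(P_2)$ in $\mathcal{C}_k(G)$, completing the isomorphism.

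The hard part, which is really just careful bookkeeping, is keeping the \emph{unlabelled} set partitions of $\mathcal{B}_k$ in exact correspondence with the \emph{labelled} colourings of $\mathcal{C}_k$: one must confirm that deleting $x$ never empties a part, so that the number of parts is a genuine invariant and the two induced set partitions of $V(H \cup G) - \{x\}$ are literally equal. This is exactly the step where the nonemptiness of each $Q_j$, inherited from $\chi(H) = k$, is used.
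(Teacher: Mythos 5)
Your proof is correct and takes essentially the same approach as the paper: the paper defines the inverse of your map $\Phi$, sending a colouring $c$ of $G$ to the partition $\{c^{-1}(i) \cup X_i : 1 \le i \le k\}$, where $\{X_1,\ldots,X_k\}$ is the unique colour partition of $H$, and asserts it is clearly an isomorphism. Your write-up supplies the details the paper leaves implicit, namely that unique $k$-colourability forces every vertex of $\mathcal{B}_k(H \cup G)$ to have exactly $k$ parts tracing the unique partition on $V(H)$, and that adjacency can only arise from moving a vertex of $G$.
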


\begin{proof} 
Suppose that $H$ is a uniquely $k$-colourable graph.
Then $H$ has chromatic number $k$.
Let $\{X_1, X_2, \ldots, X_k\}$ be the unique partition of $V(H)$ into independent sets.

Consider the following mapping $f$ from $V(\mathcal{C}_k(G))$ to $V(\mathcal{B}_k(H \cup G))$.
Let $c$ be a vertex of $\mathcal{C}_k(G)$.
Then $c$ is function from $V(G)$ to $\{1, 2, \ldots, k\}$ such that 
if $xy \in E(G)$ then $c(x) \neq c(y)$. 
Define $f(c)$ to be the partition $\{c^{-1}(i) \cup X_i: 1 \leq i \leq k\}$ of $V(G \cup H)$.
In the mapping $f$, the cells in the unique partition of $V(H)$ 
into $k$ independent sets
distinguish the colour classes  with respect to $c$.
%
%
%
This mapping is clearly a isomorphism. 
\end{proof}

Let $G \vee H$ denote the {\em join} of the disjoint graphs $G$ and $H$ and $G \Box H$ denote their {\em Cartesian product}.

\begin{proposition}
Let $G$ and $H$ be disjoint graphs and let $k = |V(G\vee H)|$.  Then  $\mathcal{B}_k(G \vee H) \cong  \mathcal{B}_k(G) \Box \mathcal{B}_k(H)$.
\end{proposition}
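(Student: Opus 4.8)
The plan is to exhibit an explicit isomorphism built from the single structural fact that governs independent sets in a join. First I would observe that, because every vertex of $G$ is adjacent to every vertex of $H$ in $G \vee H$, no independent set of $G \vee H$ can meet both $V(G)$ and $V(H)$; hence each independent set of $G \vee H$ is either an independent set of $G$ contained in $V(G)$ or an independent set of $H$ contained in $V(H)$. Consequently any partition $p$ of $V(G \vee H)$ into independent sets splits uniquely as $p = p_G \cup p_H$, where $p_G$ is the collection of cells lying in $V(G)$ and $p_H$ the collection lying in $V(H)$; moreover $p_G$ is a partition of $V(G)$ into independent sets of $G$ and $p_H$ is a partition of $V(H)$ into independent sets of $H$.

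Next I would use the choice $k = |V(G \vee H)| = |V(G)| + |V(H)|$ to identify the two vertex sets. Since $k \ge |V(G)|$ and $k \ge |V(H)|$, the remark that $\mathcal{B}_k(G) \cong \mathcal{B}_{|V(G)|}(G)$ for $k \ge |V(G)|$ shows that $V(\mathcal{B}_k(G))$ consists of \emph{all} partitions of $V(G)$ into independent sets, and likewise for $H$. If $p_G$ has $a$ cells and $p_H$ has $b$ cells, then $a \le |V(G)|$ and $b \le |V(H)|$, so $p = p_G \cup p_H$ has $a + b \le k$ cells and is genuinely a vertex of $\mathcal{B}_k(G \vee H)$. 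The map $\phi : (p_G, p_H) \mapsto p_G \cup p_H$ is therefore a bijection from $V(\mathcal{B}_k(G)) \times V(\mathcal{B}_k(H))$ onto $V(\mathcal{B}_k(G \vee H))$.

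It remains to check that $\phi$ preserves and reflects adjacency, and this is where the Cartesian (rather than some other) product appears. Two partitions of $V(G \vee H)$ are adjacent exactly when their restrictions to $V(G \vee H) - \{x\}$ agree for some single vertex $x$. Such an $x$ lies in exactly one of $V(G)$, $V(H)$, and deleting it alters only the cells on that side while leaving the other side's partition untouched. Thus if $x \in V(G)$ then the $H$-parts must coincide, $p_H = p'_H$, and $p_G, p'_G$, being distinct and agreeing off $x$, are adjacent in $\mathcal{B}_k(G)$; the case $x \in V(H)$ is symmetric. This is precisely the adjacency rule of $\mathcal{B}_k(G) \Box \mathcal{B}_k(H)$: change one coordinate along an edge and fix the other.

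I expect the only real subtlety, and a point to state carefully rather than a genuine obstacle, is ruling out a ``diagonal'' edge in which both $p_G \ne p'_G$ and $p_H \ne p'_H$. This is impossible because the witness is a single vertex confined to one side, which is exactly what forces the product to be Cartesian and not, say, strong or categorical. The independent-set-splitting observation of the first paragraph is the substantive content; everything after it, including the verification that the part count $a + b$ never exceeds $k$, is routine bookkeeping.
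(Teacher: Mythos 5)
Your proof is correct and follows essentially the same route as the paper's: both establish the natural bijection $(p_G, p_H) \mapsto p_G \cup p_H$ arising from the fact that the join's edges forbid independent sets meeting both sides, and both check that the single-vertex adjacency witness must lie on one side, yielding exactly Cartesian-product adjacency. Your write-up merely fills in details the paper leaves implicit (the cell-count bound $a+b \le k$ and the exclusion of ``diagonal'' edges), which is fine.
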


\begin{proof}
In the graph $G \vee H$, every vertex in the copy of $G$ is adjacent to every
vertex in the copy of $H$.
Thus, there is a natural 1-1 correspondence between 
the set of ordered pairs $(P_1, P_2)$, where $P_1$ is a partition of $V(G)$ 
into independent sets and 
 $P_2$ is a partition of $V(H)$ into independent sets,  and
 the set of 
partitions of $V(G \vee H)$ into independent sets:
 the ordered pair $(P_1, P_2)$ corresponds to the partition $P_1 \cup P_2$.
 It is clear that 
 $(P_1, P_2)$ and $(Q_1, Q_2)$ are adjacent in 
 $\mathcal{B}_k(G \vee H)$ if and only if 
 $P_1 = Q_1$ and  $P_2Q_2 \in E(\mathcal{B}_k(H))$,
 or 
 $P_2 = Q_2$ and  $P_1Q_1 \in E(\mathcal{B}_k(G))$.
The result follows.
\end{proof}

%

We will require a small variation of the Cartesian product in our subsequent work.  Let $K_{r}\square^+ K_{s}$ be the graph obtained from
$K_{r} \square K_s$ as follows.
Let $(a, b)$ be a vertex of $K_{r} \square K_s$.
Add a new vertex $v$ and edges joining $v$ to $(a, b)$ and
all neighbours of $(a, b)$. 
Clearly any two graphs constructed in this way are isomorphic.
In the lemma below, we will refer to $(a, b)$ as the \emph{cloned vertex}.


\begin{lemma} 
Let $r$ and $s$ be positive integers.
 Then, in the graph 
$K_r\square^+ K_s$ there is a Hamilton path from the cloned vertex to every other vertex.
\label{HamPath}
\end{lemma}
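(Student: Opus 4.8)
The plan is to reduce the whole problem to the Hamilton-connectivity of the ordinary rook's graph $R = K_r \square K_s$ and then to re-insert the added vertex. The key structural observation is that the new vertex $v$ is a \emph{true twin} of the cloned vertex $c=(a,b)$: by construction its neighbourhood is $c$ together with all of row $a$ and all of column $b$, which is exactly the closed neighbourhood of $c$ in $R$. In particular, $N[v]=N[c]$, so every neighbour of $c$ in $R$ is also a neighbour of $v$.

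Using this, I would reduce to $R$ as follows. Suppose first that the target $w$ is a grid vertex. If $R$ has a Hamilton path $c, z, \dots, w$, then its second vertex $z$ is a neighbour of $c$, hence lies in row $a$ or column $b$ and is therefore adjacent to $v$; consequently $c, v, z, \dots, w$ is the desired Hamilton path of $K_r\square^+ K_s$. If instead $w=v$, I would take a Hamilton path of $R$ from $c$ to any neighbour $z'$ of $c$ and append $v$, obtaining $c,\dots,z',v$. Thus it suffices to produce, in $R$, a Hamilton path from $c$ to every other vertex; I would in fact establish the stronger statement that $R$ is Hamilton-connected. The only genuine exception is $r=s=2$, where $R=C_4$ is bipartite and the reduction fails; for that single graph I would simply exhibit by hand the four required Hamilton paths of $K_2\square^+ K_2$.

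For all remaining cases I would prove that $R=K_r\square K_s$ is Hamilton-connected by induction on $r+s$, taking $r\le s$ without loss of generality. The base cases $r=1$ (where $R=K_s$) and $(r,s)\in\{(2,2),(2,3)\}$ are verified directly. For the inductive step with $s\ge 3$ and endpoints $u,w$, I would choose a column $\ell$ containing neither $u$ nor $w$; such a column exists because $u,w$ occupy at most two of the $s\ge 3$ columns. Deleting $\ell$ leaves $K_r\square K_{s-1}$ (after possibly swapping the two factors), which is Hamilton-connected by induction, so it has a Hamilton path $P$ from $u$ to $w$. Now $\ell$ induces a clique $K_r$, and each of its vertices $(i,\ell)$ is joined to the rest of row $i$. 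Since $r\ge 2$, the path $P$ meets at least two rows and hence contains a consecutive pair $(p,a'),(q,b')$ with $p\ne q$; I would splice $\ell$ in by replacing the edge between them with the detour $(p,a'),(p,\ell),\dots,(q,\ell),(q,b')$, whose middle portion is a Hamilton path of the clique $\ell$ from $(p,\ell)$ to $(q,\ell)$, available because $p\ne q$. This produces a Hamilton path of $R$ from $u$ to $w$.

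The main obstacle, and the reason for the tailored base cases, is the interaction between the splicing and the unique non-Hamilton-connected graph $C_4$: one must choose the deleted line so that it avoids both endpoints \emph{and} keeps the reduced graph out of the $C_4$ case, while also guaranteeing a usable ``perpendicular'' edge of $P$ through which the entire clique $\ell$ can be threaded. The twin vertex $v$ causes no difficulty except in the bipartite base case $r=s=2$, where the reduction to $R$ is unavailable and the paths are therefore constructed explicitly.
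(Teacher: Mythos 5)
Your proposal is correct, and its skeleton matches the paper's: reduce to Hamilton-connectivity of the rook's graph $K_r \square K_s$, treat $r=s=2$ (where that graph is $C_4$) as the lone exceptional case by hand, and handle the degenerate $r=1$ or $s=1$ situations separately. The difference is one of self-containedness. The paper's proof is essentially three lines: it asserts, as a known fact and without proof or citation, that $K_r \square K_s$ is Hamilton-connected for $r,s>1$ unless $r=s=2$, and then says the lemma ``follows immediately,'' leaving to the reader both that fact and the insertion of the extra vertex. You supply both missing pieces: the true-twin observation ($N[v]=N[(a,b)]$) that lets you splice $v$ in immediately after the cloned vertex (or append it at the end when $v$ is the target), and an inductive proof of Hamilton-connectivity of $K_r\square K_s$ by deleting a column avoiding both endpoints and threading the deleted clique through a row-changing edge of the resulting Hamilton path, with $(2,3)$ kept as a base case precisely so the induction never lands on $C_4$. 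Your column-deletion induction is sound (the deleted column's clique is Hamilton-connected, and a Hamilton path in a graph with $r\ge 2$ rows must contain an edge between distinct rows), so what your write-up buys is a complete, citation-free proof of a statement the paper only asserts; what the paper's version buys is brevity, at the cost of resting on an unreferenced claim.
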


\begin{proof}
If $r = 1$, then $K_{r}\square^+ K_{s} \cong K_{s+1}$ and the statement follows.
The case where $s = 1$ is similar. 
For $r > 1$ and $s > 1$, the graph $K_{r}\square K_{s}$ is Hamilton connected unless $r = s = 2$.  Thus, if $r \neq 2$ or $s \neq 2$, the statement follows immediately.  
It is easy to directly check that the statement holds when $r = s = 2$.  
\end{proof}

\section{Every graph has a Hamiltonian Bell colour graph}

We note that, for any $k \geq n$, the graph $B_k(K_n) \cong K_1$, and hence is not Hamiltonian.
Similarly, for any $k \geq n-1$, the graph $B_k(K_n - e)$ is isomorphic to $K_1$ or $K_2$, and hence is not Hamiltonian.

\begin{theorem}
For any graph $G$ on $n$ vertices which is neither $K_n$ nor $K_n-e$, $\mathcal{B}_{n}(G)$  is Hamiltonian. 
\label{In}
\end{theorem}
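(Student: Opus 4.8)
The plan is to induct on $n=|V(G)|$, deleting a single vertex $v$ and reassembling a Hamilton cycle of $\mathcal{B}_n(G)$ from one of $\mathcal{B}_{n-1}(G-v)$. The small cases (say $n\le 4$) I would check directly: for such $n$ only a handful of graphs are neither $K_n$ nor $K_n-e$, and one confirms Hamiltonicity of their Bell colour graphs by hand (for instance one checks that $\mathcal{B}_3(\overline{K}_3)\cong K_5-e$, and that $\mathcal{B}_4(C_4)$ is Hamiltonian, the latter being a genuine exceptional base case because every vertex deletion turns $C_4$ into $K_3-e$). I would first dispose of the case in which $G$ has a dominating vertex $v$. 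Then $v$ is forced to be a singleton in every partition of $V(G)$ into independent sets, so $G=K_1\vee(G-v)$, and the product identity for joins from the previous section gives $\mathcal{B}_n(G)\cong\mathcal{B}_n(K_1)\,\Box\,\mathcal{B}_n(G-v)\cong\mathcal{B}_{n-1}(G-v)$. Since $G\neq K_n,K_n-e$ forces $G-v\neq K_{n-1},K_{n-1}-e$, the inductive hypothesis settles this case, and I may henceforth assume $G$ has no dominating vertex.

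Next I choose the vertex $v$ so that the base graph is itself covered by the induction. Because $G$ is neither $K_n$ nor $K_n-e$, its complement has at least two edges, and a short case analysis according to whether these two non-edges of $G$ meet or are disjoint shows that, past the base cases, one can delete a vertex $v$ for which $G-v$ still has at least two non-edges, i.e. $G-v\notin\{K_{n-1},K_{n-1}-e\}$; by induction $\mathcal{B}_{n-1}(G-v)$ then has a Hamilton cycle $C=(q_1,\dots,q_m)$. Among the admissible choices I would take $v$ of minimum degree, for the reason given below. The key structural observation is the fibre decomposition: $\mathcal{B}_n(G)$ projects onto the partitions of $G-v$, the fibre over a partition $q$ being the set of admissible placements of $v$ (into a block of $q$ that avoids $N(v)$, or as a new singleton). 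Any two placements agree off $v$, so each fibre is a clique $Q_i$ whose size $f_i$ is one more than the number of blocks of $q_i$ avoiding $N(v)$; and a cross edge between $Q_i$ and $Q_{i+1}$ comes from the base edge $q_iq_{i+1}$ (obtained by relocating some vertex $x\neq v$) precisely when $v$ is placed compatibly. In particular the ``$v$ a singleton'' lifts of $q_i$ and $q_{i+1}$ are always adjacent, so the singleton section carries a lifted copy of $C$.

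The construction then visits the fibres in the cyclic order of $C$: enter $Q_i$, run a Hamilton path of the clique through all of its vertices, and leave by a cross edge to $Q_{i+1}$, closing up with $Q_m\to Q_1$. As a clique has a Hamilton path between any two distinct vertices, this succeeds provided the cross edges can be chosen so that the entry and exit vertices of each fibre differ when $f_i\ge 2$ and coincide at the forced singleton when $f_i=1$. When $q_i$ has at least three blocks avoiding $N(v)$ there are non-singleton cross edges to spare, since relocating $x$ disturbs at most two blocks and any untouched block that avoids $N(v)$ yields a cross edge in which $v$ sits in that block on both sides; this abundance is exactly why I take $v$ of minimum degree, so that large fibres are the rule.

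The delicate points, and what I expect to be the main obstacle, are the fibres of size $1$ or $2$, where the placement of $v$ is forced or nearly forced and the entry/exit vertices cannot be chosen freely. I would treat a transition that relocates $x$ while $v$ is also nearly free by examining the joint placements of the pair $\{v,x\}$: these form a rook's-graph configuration in which the ``$v$ and $x$ together'' state plays the role of a cloned corner, so Lemma~\ref{HamPath} furnishes a Hamilton path through this $K_r\square^+K_s$ gadget that begins at the prescribed merged configuration and ends at whichever vertex the next cross edge demands. Splicing these local paths into the singleton backbone of $C$, and verifying that no two consecutive near-trivial fibres are ever forced to use the same vertex as both entry and exit, is the technical heart of the argument; the minimum-degree choice of $v$ and the flexibility lemma are the two tools I would rely on to push it through.
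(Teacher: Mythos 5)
Your setup is sound and genuinely different from the paper's: you delete a single vertex and induct upward, whereas the paper takes a smallest counterexample and deletes \emph{two non-adjacent} vertices $x,y$ at once, so that the fibre over each colouring of $G-x-y$ carries a spanning $K_{l_1}\square^+ K_{l_2}$ and Lemma~\ref{HamPath} supplies entry-to-exit flexibility inside every fibre. But your proposal stops exactly where the theorem becomes hard, and the part you defer as ``the technical heart'' is not a verification to be pushed through with the stated tools --- your traversal template is provably impossible in some of the configurations it must handle. The obstruction is a global parity one, not the local condition (``no two consecutive near-trivial fibres\dots'') you propose to check. If every fibre has size $2$ and all cross edges between consecutive fibres are ``parallel'' (singleton-to-singleton and merged-to-merged), then entering and exiting each clique at distinct vertices forces the entry type to alternate around the cycle, so a contiguous fibre-by-fibre traversal can close up only if the base Hamilton cycle has \emph{even} length. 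This situation occurs: take $G=K_{2,3}$ and $v$ a vertex of the part of size $2$. Then $G-v=K_{1,3}$, $\mathcal{B}_4(K_{1,3})\cong K_5-e$ has $5$ vertices (odd), every fibre has exactly two elements ($v$ merged with the other degree-$3$ vertex, or $v$ a singleton), and every cross edge is parallel; so $\mathcal{B}_5(K_{2,3})$ is a prism over an odd cycle and no Hamilton cycle visits the fibres contiguously in cyclic order. The graph is still Hamiltonian, but only via a traversal that abandons your template (run along the top layer, cross once, run back along the bottom), which is precisely what the paper does in its analogous case ($m'$ odd, $\deg_G x=\deg_G y=n-2$).

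For $K_{2,3}$ you happen to escape by choosing $v$ in the part of size $3$, but nothing in your proposal shows that an admissible $v$ avoiding the parity obstruction always exists, and this existence question is where all the real work lies. The paper's proof is structured around exactly this difficulty: when the base cycle length $m'$ is even the splicing is routine; when $m'$ is odd the authors first dispose of the all-fibres-of-size-two case by the prism traversal, and then, in the remaining case, prove two structural claims (adjacency of $x$ and $y$ to all of $V(G')\setminus\{z_1,z_2\}$, and to at least one of $z_1,z_2$) that force $\delta(G)\ge n-3$ and culminate in a contradiction with a vertex of degree at most $n-4$ --- i.e., the bad configuration cannot occur in a minimal counterexample. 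Your sketch has no counterpart to this analysis: the ``rook's-graph gadget'' remark does not engage the parity issue at all, and the minimum-degree choice of $v$ is a heuristic (indeed, in your own dominating-vertex reduction and in examples like $K_n$ minus a star, the admissible choices of $v$ need not include any minimum-degree vertex). As it stands the proposal is a correct reduction plus an honest flag of the open step, not a proof.
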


\begin{proof}
Let $G$ be a smallest counterexample.  
Let $|V(G)|=n$.
Since $G$ is not complete, there exist two non-adjacent vertices $x$ and $y$. 
Let $G' = G-x-y$.

Every colouring of $G$ can be regarded as an extension of a colouring of $G'$.  
Any fixed colouring of $G'$ can be extended to a colouring of $G$ in five possible ways:

(1) both $x$ and $y$ are each assigned the same colour as a vertex of $G'$;

(2) $x$ is assigned a new colour, and $y$ is assigned the same colour as a vertex of $G'$;

(3) $x$ is assigned the same colour as a vertex of $G'$, and $y$ is assigned a new colour;

(4) $x$ and $y$ are assigned different new colours;

(5) $x$ and $y$ are both assigned the same new colour.\\
%
Let $c$ be a fixed colouring of $G'$.
Suppose that, in any extension of $c$ to a colouring of $G$,
there are $l_1-1$ existing colours available for $x$, and $l_2-1$ existing colours available for $y$.  
Then, since colourings are partitions of $V(G)$ into independent sets,  there are $l_1l_2$ extensions of $c$ corresponding to 
possibilities (1) through (4), and one more corresponding to possibility (5).
The subgraph of $\mathcal{B}_n(G)$ induced by any set of extensions that agree on $x$ is a complete graph.
Similarly,  the subgraph induced by any set of extensions that agree on $y$ is a complete graph.
It follows that the subgraph of $\mathcal{B}_n(G)$ induced by the extensions of $c$ corresponding to possibilities (1) through (4) has 
$K_{l_1}\square K_{l_2}$ as a spanning subgraph.
Since the extension corresponding to possibility (5) has the same neighbours in $\mathcal{B}_n(G)$ as the one corresponding to possibility (4), we have shown that 
the subgraph of $\mathcal{B}_n(G)$ induced by the extensions of $c$ has 
$K_{l_1}\square^+ K_{l_2}$ as a spanning subgraph.

%
%
%
%
%

For a colouring $c$ of $G'$, we will denote the extension of $c$ to $G$  in which
$x$ and $y$ are assigned the same new colour by
$c(xy)$, and    
the extension of $c$ to $G$  
in which $x$ and $y$ are assigned different new colours by
 $c(x|y)$.  
 Clearly,  if $c_1$ is adjacent to $c_2$ in $\mathcal{B}_n(G')$, then $c_1(xy)$ is adjacent to $c_2(xy)$ and $c_1(x|y)$ is adjacent to  $c_2(x|y)$ in $\mathcal{B}_n(G)$.

Since $G$ is not a complete graph minus an edge, the graph $G'$ has a nonempty vertex set. 

Suppose $G'$ is complete.
Then it has only one colouring, $c$, and every colouring of $G$ is an extension of $c$.  
Therefore there are positive integers $r$ and $s$ such that $\mathcal{B}_n(G)$ is a spanning supergraph of $K_s\square^+ K_r$.  
Since $G$ is not a complete graph minus an edge, 
either $x$ or $y$ must be non-adjacent to at least one vertex of $G'$.
Hence at least one of $r$ and $s$ is at least 2.  
It now follows from  Lemma~\ref{HamPath} that $\mathcal{B}_n(G)$ is Hamiltonian.

Now suppose $G'$ is a complete graph minus an edge. 
Then  $G'$ has exactly two colourings.  
Let $c_1$ be the colouring of $G'$ where the two non-adjacent vertices are assigned the same colour,
and let $c_2$ be the colouring where the two non-adjacent vertices in $G'$ are assigned different colours.  
By Lemma~\ref{HamPath}, there is a Hamilton path $P_1$, between  $c_1(xy)$ and $c_1(x|y)$ in the subgraph of $\mathcal{B}_n(G)$ induced by the extensions of the colouring $c_1$.  
Similarly, there is a Hamilton path between $c_2(x|y)$ and $c_2(xy)$ in the subgraph of $\mathcal{B}_n(G)$ induced by the extensions of the colouring $c_2$. 
The paths $P_1$ and $P_2$  and the edges $c_1(xy)c_2(xy)$ and $c_1(x|y)c_2(x|y)$ are a  Hamilton cycle in $\mathcal{B}_n(G)$.

Finally, suppose $G'$ is neither a complete graph nor a complete graph minus an edge.
Since $G$ is a smallest counterexample, 
$\mathcal{B}_n(G') = \mathcal{B}_{n-2}(G')$ has a Hamilton cycle, $c_1, c_2\ldots c_{m'}$, where $|V(\mathcal{B}_n(G'))|$ $=m'$.
By Lemma~\ref{HamPath}, 
the subgraph of $\mathcal{B}_n(G)$ induced by the extensions of colouring $c_i$ to $G$ 
has a Hamilton path, $P_i$,  from $c_i(xy)$ to $c_i(x|y)$. 

If $m'$ is even then the paths $P_1, P_2, \ldots, P_{m'}$, together with the edges $c_i(xy)c_{i+1}(xy)$ for  even $i<m'$,  the edges $c_i(x|y)c_{i+1}(x|y)$ for odd $i$ and the edge $c_{m'}(xy)c_1(xy)$ are a Hamilton cycle. 

We now consider the situation when $m'$ is odd. 
First, observe that if $deg_G(x)=deg_G(y)=n-2$, then for each $i$, $c_i$ has exactly two extensions, $c_i(xy)$ and $c_i(x|y)$.  
In this case, the edges $c_i(xy)c_{i+1}(xy)$ and $c_i(x|y)c_{i+1}(x|y)$ for $i=1, 2, \ldots m'-1$, in conjunction with $c_1(xy)c_1(x|y)$ and $c_{m'}(xy)c_{m'}(x|y)$ are a Hamilton cycle.  Hence assume that at least one of $x$ and $y$ has degree at most $n-3$.
Without loss of generality, $c_1$ is the colouring in which every vertex of $G'$ is assigned a different colour, so that $n-2$ colours are used and every colour class is a singleton.
Then in 
the colouring $c_2$,
there is exactly one colour class of size 2, say $\{z_1, z_2\}$.   

We claim that $x$ and $y$ are adjacent to every vertex in $V(G') - \{z_1, z_2\}$.  Suppose, by way of contradiction, $x$ is not adjacent to $v \in V(G') - \{z_1, z_2\}$.
Let $w_1$ and $w_2$ be, respectively, be the extensions of $c_1$ and $c_2$ in which $x$ is assigned the same colour as $v$, and $y$ is assigned a new colour.
Note that $w_1$ and $w_2$ are adjacent.
By Lemma~\ref{HamPath}, there is a Hamilton path, $Q_1$, from $c_1(xy)$ to $w_1$ in the subgraph induced by the extensions of $c_1$ to colourings of $G$,  
and  a Hamilton path $Q_2$ from $c_2(x|y)$ to $w_2$ in the subgraph induced by the extensions of $c_2$ to colourings of $G$.  
The paths $Q_1, Q_2, P_3, P_4, \ldots, P_{m'}$, together with the edges $w_1w_2$, $c_{m'}(xy)c_1(xy)$, edges of the form $c_i(xy)c_{i+1}(xy)$ when $i$ is odd ($3\le i< m'$), and of the form  $c_i(x|y)c_{i+1}(x|y)$ when $i$ is even, are a Hamilton cycle in $\mathcal{B}_n(G)$.  A contradiction, as $G$ is a counterexample, which proves the claim.

Next, we claim that $x$ is  adjacent, in $G$, to either $z_1$ or $z_2$ and that $y$ is  adjacent, in $G$, to either $z_1$ or $z_2$ .  Suppose, by way of contradiction, $x$ is  adjacent, in $G$, to neither $z_1$ nor $z_2$.
Let $w_1$ be the extension of $c_1$ 
in which $x$ is assigned the same colour as $z_1$ and $y$ is assigned a new colour.
Let $w_2$ be the extension of $c_2$ in which
$x$, $z_1$ and $z_2$ are all assigned the same colour, and $y$ is assigned a new colour.
The same technique as in the first claim yields a Hamilton cycle in $\mathcal{B}_n(G)$, contradicting $G$ is a counter example, which proves the claim.

Recall $G$ is a minimal counterexample.  By the first claim $x$ is adjacent to every vertex of  $V(G') \setminus \{z_1, z_2\}$.   
By the second claim $x$ is adjacent to either $z_1$ or $z_2$ or both.  
We conclude that $deg_G(x)\ge n-3$ and $x$ is adjacent to every vertex of  $V(G') \setminus \{z_1, z_2\}$.  
The same argument implies $deg_G(y)\ge n-3$ and $y$ is adjacent to every vertex of  $V(G') \setminus \{z_1, z_2\}$.  
Furthermore, as  $x$ and $y$ were chosen to be arbitrary non-adjacent vertices in $G$, it follows that $\delta (G)\ge n-3$.

By our previous argument,  one of $x$ or $y$ has degree at most $n-3$, so that one of them has degree $n-3$, say $x$.
Without loss of generality, $x$ is not adjacent to $z_1$, and is adjacent to every vertex in  $V(G') \setminus \{z_1\}$.

By choice of $c_1$, in the colouring $c_{m'}$ there is exactly one colour class of size two, say $\{a, b\}$, and 
all other colour classes have size 1.
It follows from symmetry and the first claim that  $z_1 \in \{a, b\}$; without loss of generality, $a=z_1$.  
Since $c_2 \neq c_{m'}$, 
$b \ne z_2$. 
Therefore, $z_1$ is not adjacent to any of the vertices $x$, $z_2$ and $b$, so that
 $deg_G(z_1)\le n-4 < \delta(G)$, a contradiction. It follows that  $G$ does not exist.
 \end{proof}

It follows from Theorem~\ref{In} that, for every other graph $G$, there is a least integer $n_0$ such that $B_k(G)$ is 
Hamiltonian whenever $k \geq n_0$.
We now show that there exist graphs for which the $k$-Bell colour graph is Hamiltonian if and only if $k = |V(G)|$, that is, 
for which $n_0 =  |V(G)|$.

For $t \geq 1$, let $G_t = K_{2t} - M$, where $M = \{x_1y_1, x_2y_2, \ldots, x_ty_t\}$ is a perfect matching.  
Then $\chi(G_t) = t$.  
For each $\ell$ with $0 \leq \ell \leq t$, there is a 1-1 correspondence between the colourings of 
$G_t$ with $t+\ell$ colours and the $\ell$-element subsets of $M$ consisting of the edges of $M$ whose 
ends are assigned different colours.  
Thus there is a 1-1 correspondence between the vertices of $\mathcal{B}_{t+\ell}(G_t)$ and the binary sequences
of length $t$ with at most $\ell$ ones: the $i$-th term of the sequence equals 1 if $x_i$ and $y_i$ are assigned different colours, 
and equals $0$ otherwise.
Two vertices of $\mathcal{B}_{t+\ell}(G_t)$ are adjacent if and only if the corresponding sequences 
differ in exactly one place.
Thus, $\mathcal{B}_{t+\ell}(G_t)$ is the subgraph of the $t$-dimensional hypercube induced
by the binary sequences with at most $\ell$ ones.
This graph is bipartite, and has bipartition $(A, B)$, where $A$ is the set of binary sequences with an even 
number of ones (and at most $\ell$ ones), and $B$ is 
the set of binary sequences with an odd number of ones (and at most $\ell$ ones).
We claim that if $\ell < t$, then $|A| \neq |B|$, hence $\mathcal{B}_{t+\ell}(G_t)$ is not Hamiltonian.
Now, 
$$|A| - |B|  =  {t \choose 0} - {t \choose 1} + {t \choose 2} - \cdots + (-1)^t{t \choose \ell} = \pm {t-1 \choose \ell} \neq 0,$$
where we have used the result of \cite{Tucker}, page 128, question 44(c).
Therefore, $\mathcal{B}_{t+\ell}(G_t)$ is not Hamiltonian for all $\ell < t$,
that is, for all $\ell + t < 2t = |V(G_t)|$.

\section{The Hamiltonicity of Stirling colour graphs of trees}

In this Section, we explore the Hamiltonicity of the $k$-Stirling colour graph of trees.  The $|V(G)|$-Stirling colour graph is a singleton and not Hamiltonian, therefore we consider first the case where $|V(G)|-1$ colours are allowed.

\begin{lemma}
Let $G$ be a graph on $n$ vertices which is not complete.  Then
\begin{description}
\item{(i)} there is a 1-1 correspondence between the set of colourings of $G$ that use exactly $n-1$ colours and the set of edges of $\overline{G}$;
\item{(ii)} the graph $S_{n-1}(G)$ is isomorphic to the line graph of $\overline{G}$.
\item{(iii)} the graph $S_{n-1}(G)$ is connected if and only if $\overline{G}$ is connected;
\item{(iv)} the graph $S_{n-1}(G)$ has a Hamilton cycle if and only if $\overline{G}$ has a circuit that contains an endpoint of every edge of $\overline{G}$.
\end{description}
\label{LemmaS_{n-1}}
\end{lemma}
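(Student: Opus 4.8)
The plan is to prove parts (i) and (ii) directly, and then obtain (iii) and (iv) as consequences of the line-graph identification in (ii) together with classical facts about line graphs. For (i), note that a colouring using exactly $n-1$ colours partitions the $n$ vertices into $n-1$ nonempty independent sets; counting shows that exactly one class has two vertices $\{u,v\}$ and the other $n-2$ classes are singletons. The pair $\{u,v\}$ is independent in $G$ if and only if $uv \notin E(G)$, i.e. $uv \in E(\overline{G})$. Sending each such colouring to the pair $\{u,v\}$ is thus a map onto $E(\overline{G})$, and its inverse (given $uv \in E(\overline{G})$, merge $u$ and $v$ and leave every other vertex a singleton) shows it is a bijection.

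For (ii), I use the bijection from (i) to identify vertices of $\mathcal{S}_{n-1}(G)$ with edges of $\overline{G}$, and it remains to match adjacency. Given distinct edges $e,f \in E(\overline{G})$, I would split into two cases. If $e = uv$ and $f = uw$ share the endpoint $u$, then deleting $u$ reduces both colourings to the all-singleton partition of $V(G)-\{u\}$, so they agree off $u$ and are therefore adjacent in $\mathcal{S}_{n-1}(G)$. If $e = uv$ and $f = wx$ are disjoint, then deleting any single vertex leaves at least one of the two merged pairs $\{u,v\}$, $\{w,x\}$ intact in one colouring but not the other, so no single vertex can make them agree and they are non-adjacent. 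Hence adjacency in $\mathcal{S}_{n-1}(G)$ is exactly ``shares an endpoint in $\overline{G}$'', which is the defining adjacency of $L(\overline{G})$, giving $\mathcal{S}_{n-1}(G) \cong L(\overline{G})$.

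Parts (iii) and (iv) then reduce to (ii). For (iii) I would invoke (or reprove in one line via walk-lifting) the standard fact that $L(H)$ is connected exactly when the edges of $H$ all lie in a single connected piece. For (iv) I would apply the Harary--Nash-Williams theorem: for a graph with at least three edges, $L(H)$ is Hamiltonian if and only if $H$ has a dominating closed trail, that is, a closed trail meeting an endpoint of every edge of $H$ --- precisely the ``circuit containing an endpoint of every edge'' in the statement, taken with $H = \overline{G}$ and the convention that a single vertex is a degenerate circuit dominating its incident edges (which handles the case $\overline{G} = K_{1,m}$, where $\mathcal{S}_{n-1}(G) \cong K_m$).

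The main obstacle is twofold. The genuinely careful step is the deletion-based case analysis in (ii): one must verify both that sharing an endpoint forces adjacency and that disjoint edges are never adjacent, since everything downstream rests on this exact correspondence. The second delicate point concerns isolated vertices of $\overline{G}$ (equivalently universal vertices of $G$): such vertices contribute no vertex to $L(\overline{G})$, so when all edges of $\overline{G}$ lie in one component while an isolated vertex is present, $L(\overline{G})$ is connected even though $\overline{G}$ is not. Thus the reverse implication of (iii) as literally stated needs the reading ``$\overline{G}$ is connected once isolated vertices are discarded'' (or the added hypothesis that $\overline{G}$ has no isolated vertices); I would either insert this caveat or check it does not arise in the intended applications. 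By contrast, (iv) is unaffected by isolated vertices, since a dominating circuit need only meet the edges that actually exist.
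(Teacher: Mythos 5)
Your proposal is correct and follows essentially the same route as the paper: identify the colourings using exactly $n-1$ colours with edges of $\overline{G}$ via the unique cell of size two, match adjacency of colourings with adjacency of edges to obtain $S_{n-1}(G) \cong L(\overline{G})$, and then derive (iii) and (iv) from standard line-graph facts (the paper simply cites "properties of the line graph" where you name walk-lifting and Harary--Nash-Williams explicitly). In fact you are more careful than the paper on one point: statement (iii) is false as literally written when $G$ has a universal vertex --- e.g.\ for $G = P_3$ the graph $S_2(G) \cong K_1$ is connected while $\overline{G}$, a single edge plus an isolated vertex, is not --- and your caveat that connectivity of $\overline{G}$ should be read after discarding isolated vertices is the correct repair, one the paper's own proof silently glosses over.
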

\begin{proof}
In a colouring of $G$ that uses exactly $n-1$ colours there are exactly two vertices that are assigned the same colour, say $x$ and $y$.  Then $xy \not\in E(G)$, so $xy \in E(\overline{G})$.  This proves {\em (i)}.  

Let $c_1$ and $c_2$ be two colourings  of $G$ that use exactly $n-1$ colours.  These are adjacent in $S_{n-1}(G)$ if and only if there is a vertex $x$ which belongs to the unique cell of size two in each one.  That is, if and only if $c_1$ and $c_2$ correspond to adjacent edges of $\overline{G}$.  This proves {\em (ii)}.  Statement {\em (iii)} now follows from properties of the line graph, as does statement {\em (iv)}.
\end{proof}

\begin{lemma}
Let $T$ be a tree on $n \geq 5$ vertices.  Then $S_{n-1}(T)$ is Hamiltonian.
\label{LemmaBaseCase}
\end{lemma}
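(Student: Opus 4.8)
The plan is to pass to the complement and invoke part (iv) of Lemma~\ref{LemmaS_{n-1}}: it suffices to exhibit a \emph{circuit} (closed trail) in $\overline{T}$ that contains an endpoint of every edge of $\overline{T}$, a so-called dominating circuit. One useful observation to keep in mind is that the vertices \emph{missed} by such a circuit must be pairwise non-adjacent in $\overline{T}$, i.e. they form a clique of $T$; since $T$ is a tree this clique has at most two vertices. So it is enough to produce a Hamilton cycle of $\overline{T}$, or of $\overline{T}$ with a single well-chosen vertex deleted. I would split the argument according to the size of $\Delta(T)$.

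If $\Delta(T)\ge n-2$, let $c$ be a vertex of maximum degree and delete it. Since $c$ is incident to at least $n-2$ of the $n-1$ edges of $T$, the forest $T-c$ has at most one edge, so $\overline{T}-c=\overline{T-c}$ is either $K_{n-1}$ or $K_{n-1}-e$; as $n-1\ge 4$, this graph has a Hamilton cycle $C$. Now $C$ meets every vertex except $c$, and $c$ has at most one neighbour $u$ in $\overline{T}$ (it is isolated precisely when $T$ is a star), with $u$ lying on $C$. Hence every edge of $\overline{T}$ has an endpoint on $C$, and $C$ is the required dominating circuit.

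If instead $\Delta(T)\le n-3$, then $\overline{T}$ has minimum degree at least $2$, and I would argue it is in fact $2$-connected; the theorem of Chv\'atal and Erd\H{o}s then finishes the case, because $\alpha(\overline{T})=\omega(T)=2\le\kappa(\overline{T})$ forces $\overline{T}$ to be Hamiltonian, and any Hamilton cycle is trivially a dominating circuit. For the $2$-connectivity, I would first note that a non-star tree has connected complement (a tree can be a join only if it is a star, and the complement of a disconnected graph is connected). To exclude a cut vertex $v$, write $\overline{T}-v=\overline{T-v}$: if $v$ is not a leaf then $T-v$ is disconnected, so its complement is connected; and if $v$ is a leaf then $T-v$ is a tree whose complement is disconnected only when $T-v$ is a star, in which case re-attaching $v$ forces $\Delta(T)\ge n-2$. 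Both outcomes contradict the standing hypotheses.

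The main obstacle is exactly this $2$-connectivity step: the hypothesis $\Delta(T)\le n-3$ only yields $\delta(\overline{T})\ge 2$, which by itself is far from implying Hamiltonicity, so the proof must lean on the fine structure of tree complements — namely that removing any vertex of a tree either disconnects it (giving a connected complement) or leaves a subtree whose complement fails to be connected only in configurations already ruled out. Once $2$-connectivity is established, the bound $\alpha(\overline{T})=2$ makes Chv\'atal--Erd\H{o}s immediate, and in every case part (iv) of Lemma~\ref{LemmaS_{n-1}} converts the dominating circuit we have built back into a Hamilton cycle of $S_{n-1}(T)$.
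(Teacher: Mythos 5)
Your proof is correct, and while it opens with the same move as the paper --- reducing via part \emph{(iv)} of Lemma~\ref{LemmaS_{n-1}} to finding a circuit in $\overline{T}$ that meets every edge --- what you do afterwards is genuinely different and, in fact, more careful than the paper's own argument. The paper finishes in one line by asserting that if $T$ is not a star then $\overline{T}$ is Hamiltonian (and handling stars by the $(n-1)$-cycle avoiding the centre); that assertion is false as stated: take $T$ to be $K_{1,n-2}$ with an extra pendant vertex attached to one of its leaves (the ``chair'' when $n=5$). This $T$ is not a star, yet its centre has degree $n-2$ in $T$, hence degree $1$ in $\overline{T}$, so $\overline{T}$ is not Hamiltonian. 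Your dichotomy $\Delta(T)\ge n-2$ versus $\Delta(T)\le n-3$ is the correct one: the first case (which covers both stars and the trees just described) rightly settles for a dominating circuit rather than a Hamilton cycle, namely a spanning cycle of $\overline{T}-c=\overline{T-c}$, which is $K_{n-1}$ or $K_{n-1}-e$; and since every edge of $\overline{T}$ has an endpoint different from $c$, such a cycle automatically meets every edge. The second case proves $\overline{T}$ really is Hamiltonian: your $2$-connectivity argument is sound (complements commute with vertex deletion; a disconnected graph has connected complement; a tree is a join, equivalently has disconnected complement, only if it is a star; and re-attaching a leaf to a star forces $\Delta(T)\ge n-2$), and then Chv\'atal--Erd\H{o}s applies since $\alpha(\overline{T})=\omega(T)=2\le\kappa(\overline{T})$. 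In short, your route costs an appeal to Chv\'atal--Erd\H{o}s and some connectivity bookkeeping, but it buys a complete, self-contained proof and repairs the star/non-star dichotomy on which the paper's one-line proof founders.
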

\begin{proof}
The statement follows from  part {\em (iv)} of 
Lemma \ref{LemmaS_{n-1}} on noting that if $T$ is not a star, then $\overline{T}$ is Hamiltonian, and if $T$ is a star, then $\overline{T}$ has a cycle of length $n-1$.
\end{proof}

We now consider the $k$-Stirling colour graph of trees using fewer colours.  We use $Q_n$ to denote the 
$n$-dimensional hypercube.
The vertices of $Q_n$ are the binary sequences of length $n$, and
the partition of $V(Q_n)$ into the set of binary sequences with an even number of ones, 
and the set of sequences with an odd number of ones, is a bipartition.
We will use the following well known result, which can be proved by induction.

\begin{lemma}\label{HPinCUBE}
Suppose $x, y \in V(Q_n)$ be vertices belonging to different cells of the bipartition.
Then, there is a Hamilton path from $x$ to $y$.
\end{lemma}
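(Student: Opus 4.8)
The plan is to prove the statement by induction on $n$, which is the standard route to showing that the hypercube is Hamilton-laceable. The base case $n = 1$ is immediate: $Q_1$ is a single edge joining the two sequences $0$ and $1$, which lie in different cells, and that edge is itself the required Hamilton path. For the inductive step I would fix $n \geq 2$ and split $Q_n$ into two faces according to the value of the last coordinate: let $Q^0$ (respectively $Q^1$) be the subcube induced by the sequences ending in $0$ (respectively $1$). Each of $Q^0$ and $Q^1$ is isomorphic to $Q_{n-1}$ by deleting the last coordinate, and there is a perfect matching between them joining each sequence to the sequence obtained by flipping its last coordinate; I will write $v'$ for the matching partner of a vertex $v$. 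The key bookkeeping point is that the parity of a sequence in $Q^0$ equals its parity inside $Q_{n-1}$, while passing to the partner in $Q^1$ reverses the $Q_n$-parity but preserves the $Q_{n-1}$-parity.

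Given $x$ and $y$ in different cells of $Q_n$, I would treat two cases. If $x$ and $y$ lie in the same face, say $Q^0$, then they still lie in different cells of $Q^0 \cong Q_{n-1}$, so the induction hypothesis supplies a Hamilton path $P_0$ of $Q^0$ from $x$ to $y$. Choosing any edge $uv$ of $P_0$, the partners $u'$ and $v'$ are adjacent in $Q^1$ and hence lie in different cells there, so induction gives a Hamilton path $P_1$ of $Q^1$ from $u'$ to $v'$. Deleting the edge $uv$ from $P_0$ and inserting the detour through $u'$ and $v'$ (using $P_1$ together with the two matching edges $uu'$ and $vv'$) yields the desired Hamilton path of $Q_n$ from $x$ to $y$. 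If instead $x \in Q^0$ and $y \in Q^1$, the hypothesis that $x$ and $y$ lie in different cells of $Q_n$ forces $x$ and the image of $y$ in $Q_{n-1}$ to have the \emph{same} $Q_{n-1}$-parity. I would then pick any vertex $z \in Q^0$ whose $Q_{n-1}$-parity differs from that of $x$ (such a $z$ exists since $n-1 \geq 1$); by the previous computation $z$ also differs in parity from $y$ within its face. Induction then provides a Hamilton path of $Q^0$ from $x$ to $z$ and a Hamilton path of $Q^1$ from $z'$ to $y$, and concatenating them across the matching edge $zz'$ gives a Hamilton path of $Q_n$ from $x$ to $y$.

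I expect the only real obstacle to be the parity accounting, since the two faces sit inside $Q_n$ with opposite conventions: in each case one must verify that the endpoints handed to the induction hypothesis genuinely lie in different cells of the relevant copy of $Q_{n-1}$. Once that is pinned down — noting in particular that any edge of $P_0$ has endpoints of opposite parity, and that the second case reduces to choosing a vertex $z$ of a prescribed parity, which is always possible for $n \geq 2$ — both constructions go through verbatim, completing the induction.
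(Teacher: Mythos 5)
Your proof is correct: both cases of the induction are handled properly, and the parity bookkeeping (in particular that when $x \in Q^0$ and $y \in Q^1$ lie in different cells of $Q_n$, their images in $Q_{n-1}$ have the \emph{same} parity, so an intermediate vertex $z$ of opposite parity is needed) is exactly the point that makes the argument work. The paper itself gives no proof, stating only that the lemma is ``well known'' and ``can be proved by induction''; your argument is precisely that standard induction on $n$, splitting $Q_n$ into two copies of $Q_{n-1}$ along one coordinate, so it matches the approach the paper intends.
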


Any non-trivial tree, $T$,  is uniquely 2-colourable; hence $\mathcal{B}_2(T)=\mathcal{S}_2(T) \cong K_1$.  The graph $\mathcal{S}_3(T)$ can have a much richer structure.  The star is a special case, similarly to the situation for $k$-colour graphs \cite{CM}.  We now characterize the situations where $\mathcal{S}_3(K_{1,n})$ is, and is not,  Hamiltonian.
 By contrast, Theorem~\ref{I3} states that for any tree, the graph $\mathcal{B}_3(T)$ is Hamiltonian.

\begin{theorem}\label{star}
For any $n\ge 2$, $\mathcal{S}_3(K_{1,n})$ is Hamiltonian if and only if $n$ is odd. 
\end{theorem}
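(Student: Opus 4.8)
The plan is to first translate the combinatorial description of $\mathcal{S}_3(K_{1,n})$ into a clean description in terms of binary strings. Writing $K_{1,n}$ with centre $c$ and leaves $1,\dots,n$, in any partition into exactly three independent sets the centre must form a singleton cell (it is adjacent to every leaf), so the remaining two cells split the leaves into two nonempty sets. Thus a vertex of $\mathcal{S}_3(K_{1,n})$ is exactly an unordered partition of the $n$ leaves into two nonempty classes, i.e.\ a non-constant string in $\{0,1\}^n$ taken modulo complementation, and an edge corresponds to moving a single leaf between the two classes (flipping one bit, the result still non-constant). I would then fix canonical representatives whose first bit is $0$, so that the vertices become the strings $0w$ with $w\in\{0,1\}^{m}\setminus\{0^{m}\}$, where $m=n-1$. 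Under this encoding a one-bit flip among the last $m$ coordinates gives a hypercube edge $w\sim w'$ (with $w,w'$ differing in one place), while flipping the leading bit re-canonicalises to a \emph{complement edge} $w\sim\overline{w}$; the latter is present for every $w$ except $w=1^{m}$ (whose would-be partner $0^{m}$ is absent). So $\mathcal{S}_3(K_{1,n})$ is $Q_m-0^m$ together with the complement edges.

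For the direction ``Hamiltonian $\Rightarrow n$ odd'', I would argue the contrapositive: if $n$ is even then $m=n-1$ is odd, and I claim the parity of $\mathrm{wt}(w)$ is a proper $2$-colouring. A hypercube edge changes $\mathrm{wt}(w)$ by one, and a complement edge changes it by $m$, which is odd; hence every edge switches the colour, so the graph is bipartite. Counting representatives, the even-weight side has $2^{m-1}-1$ vertices (all even-weight $w$ except the excluded $0^m$) and the odd-weight side has $2^{m-1}$ vertices, so the two sides differ by exactly one. A bipartite graph with unequal sides has no Hamilton cycle, which settles this direction (the case $n=2$ is the degenerate single-vertex graph).

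For the direction ``$n$ odd $\Rightarrow$ Hamiltonian'', now $m=n-1$ is even and the complement edges do \emph{not} respect the weight-parity, so there is no parity obstruction. The plan is to reduce to a single Hamilton path in the punctured cube: if $Q_m-0^m$ has a Hamilton path, using only hypercube edges, between two antipodal vertices $e_1=10^{m-1}$ and $\overline{e_1}=01^{m-1}$, then adjoining the complement edge $e_1\sim\overline{e_1}$ (which is present, since neither endpoint is $0^m$) closes it into a Hamilton cycle of $\mathcal{S}_3(K_{1,n})$. Both $e_1$ and $\overline{e_1}$ have odd weight, which is the larger bipartition class of $Q_m-0^m$ (the class not containing $0^m$), so the endpoints are consistent with the parity of a Hamilton path on the odd number $2^m-1$ of vertices.

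The main obstacle is exactly this last step: producing a Hamilton path of $Q_m-0^m$ between two vertices of the \emph{same} bipartition class. Lemma~\ref{HPinCUBE} only provides Hamilton paths between \emph{opposite}-parity vertices of the full cube, so it does not apply directly. I would obtain the required path by the standard subcube induction: split $Q_m$ by its last coordinate into $H_0$ (containing $0^m$) and $H_1$, route a full-cube Hamilton path through $H_1$ via Lemma~\ref{HPinCUBE}, route a smaller punctured Hamilton path through $H_0\cong Q_{m-1}-0^{m-1}$, and join the two pieces across a single matching edge; the base case $m=2$ is the triangle $10\sim11\sim01\sim10$ realising $\mathcal{S}_3(K_{1,3})\cong K_3$. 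Equivalently, one may invoke the known \emph{hyper-Hamiltonian-laceability} of the hypercube, namely that $Q_m$ with any one vertex deleted still has a Hamilton path between any two vertices of the opposite class; either route carries the technical weight of the proof, and verifying the endpoint/parity bookkeeping in the induction is where care is needed.
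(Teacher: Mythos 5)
Your proposal is correct, and for the harder direction ($n$ odd) it takes a genuinely different route from the paper. Both arguments begin with the same encoding (the punctured hypercube $Q_{n-1}-0^{n-1}$ with complement/antipodal edges added) and both settle the even case by the same bipartite-imbalance argument. For the odd case, the paper splits the cube by the \emph{first two} coordinates: it builds a cycle through the 00-, 10- and 11-blocks using Lemma~\ref{HPinCUBE}, one hypercube edge and one complement edge, then locates two consecutive vertices $11x$, $11y$ on that cycle and splices in a Hamilton path of the 01-block from $01x$ to $01y$. You instead reduce everything to a single statement about the punctured cube: that $Q_m-0^m$ (with $m=n-1$ even) has a Hamilton path, using hypercube edges only, between the antipodal odd-weight pair $10^{m-1}$ and $01^{m-1}$; a single complement edge then closes the cycle. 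That statement is a special case of the hyper-Hamiltonian laceability of the hypercube, which is a known theorem, so your proof is complete if you cite it. What your route buys is modularity: the entire difficulty is isolated in one hypercube lemma, and the splicing bookkeeping disappears. What the paper's route buys is self-containedness: it needs nothing beyond the elementary Lemma~\ref{HPinCUBE}, which is stated (and easily proved) in the paper, whereas hyper-laceability is a strictly stronger tool.

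One caution about your fallback induction: the subtlety there is more than ``endpoint/parity bookkeeping.'' The statement you name as the induction target --- a Hamilton path of $Q_m-0^m$ between $10^{m-1}$ and its complement --- is \emph{false} in odd dimensions: both ends of any Hamilton path of the punctured cube must lie in the larger (odd-weight) parity class, and for odd $m$ the vertex $01^{m-1}$ has even weight. So the induction from dimension $m$ to dimension $m-1$ cannot reuse the same statement, and the hypothesis must be strengthened, e.g.\ to ``for every $k\ge 2$, $Q_k-0^k$ has a Hamilton path from $10^{k-1}$ to any other odd-weight vertex.'' With that strengthening the step goes through exactly as you sketch: split off a coordinate in which the target vertex has a $1$, apply the induction hypothesis in the half containing $0^k$ toward a freely chosen odd-weight vertex, cross over on a matching edge, and finish in the other half with Lemma~\ref{HPinCUBE}. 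With either the citation or this strengthened induction, your proof is sound.
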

\begin{proof}
The vertex of degree $n$ in $K_{1, n}$ will always be in its own cell in a partition of the vertices into independent sets.  Hence we must consider the partitions of $n$ independent vertices (each of which has degree 1) into exactly two cells.  One of the leaves, say $x$, is used to label the two cells and will be considered to correspond to  colour 0.  The colourings can now be listed as the binary sequences of length $n-1$ containing at least one 1.  Two of these are adjacent in $\mathcal{S}_3(K_{1,n})$ if they either differ in exactly one entry 
(corresponding to colourings that agree on 
$K_{1, n} - y$, where $y \neq x$)
or they differ in every entry 
(corresponding to colourings that agree on
$K_{1, n} - x$).
This graph is obtained from an $n-1$ dimensional hypercube by adding edges joining antipodal vertices,  and deleting the vertex corresponding to the sequence of all zeros.  

For even $n$,  it is easy to verify that the graph is bipartite (the sequences with an even (odd) number of ones are independent).  Hence  $\mathcal{S}_3(K_{1,n})$ is a bipartite graph with an odd number of vertices and not Hamiltonian. For $n=3$,  $\mathcal{S}_3(K_{1,3})=K_3$ and hence is Hamiltonian. 

Let $n\ge 5$ be an odd integer.
By the above discussion, the subgraph of $\mathcal{S}_3(K_{1,n})$ induced by the binary sequences in which the first two elements are 0 is isomorphic to $Q_{n-3} - \{000\ldots0\}$.  As  $Q_{n-3}$ is Hamiltonian, by symmetry, 
there is a Hamilton cycle in $Q_{n-3}$ on which the vertices adjacent to $000\ldots 000$ are $000\ldots 001$ and $000\ldots 010$. It follows that in $Q_{n-3}  - \{000\ldots0\}$
there is a Hamilton path from $000\ldots 001$ to $000\ldots 010$.  This corresponds to a path in $\mathcal{S}_3(K_{1,n})$ containing all of the binary sequences in which the first two elements are 0.    

The subgraph of $\mathcal{S}_3(K_{1,n})$ induced by the binary sequences where the first two elements are either 10 or 11 is  isomorphic to $Q_{n-2}$.  By Lemma~\ref{HPinCUBE} and since $n$ is odd (and therefore the length of each binary sequence of length $n-1$ is even), there is a Hamilton path, $P$, in this graph from $100\ldots 001$ to $111\ldots 101$ which corresponds to a path in $\mathcal{S}_3(K_{1,n})$ containing all of the binary sequences in which the first two elements are 10 or 11.   Further, 
we note that in the subgraph where the first two elements are 10 or 11, each element of the form $11x$ has exactly one neighbour where the first two elements are 10.  It follows that there are two consecutive vertices on $P$ in which the first two entries are 11.

As   $100\ldots 001$ is adjacent to  $000\ldots 001$ and $111\ldots 101$ is adjacent to  $000\ldots 010$, there is a cycle, $C$, containing all of the binary sequences which start with 00, 10 or 11 and,  by the above, 
there are two consecutive vertices on $C$ which begin with 11, say $11x$ and $11y$.

The subgraph of  $\mathcal{S}_3(K_{1,n})$ induced by the binary sequences where the first two elements are 01 is isomorphic to $Q_{n-3}$.  Let $P$ be the  path obtained from $C$ by deleting the edge joining $11x$ and $11y$.
 By Lemma~\ref{HPinCUBE},  there is a path in $\mathcal{S}_3(K_{1,n})$ from $01x$ to $01y$ containing all of the vertices in which the first two elements are 01. It now follows that there is a Hamilton cycle in $\mathcal{S}_3(K_{1,n})$, when $n$ is odd. \end{proof}

From Theorem~\ref{star},  $\mathcal{S}_3(T)$ is not necessarily Hamiltonian. Lemma~\ref{newB3}  is a technical Lemma that can used in the proof that $\mathcal{S}_4(T)$ is Hamiltonian for trees with sufficient vertices.

\begin{lemma}\label{newB3}
For any tree $T$ with at least four vertices, and any vertex $x \in V(T)$, there are distinct vertices $a,b\in V(T)$  and a Hamilton path $P$ in $S_3(T)$ for which the end vertices are $\{A-\{a\}, B-\{a\}, \{a\}\}$ and $\{A-\{b\}, B-\{b\}, \{b\}\}$ where $x\ne a$, $x\ne b$ and $\{A,B\}$ is the 2-colouring of $T$.  
\end{lemma}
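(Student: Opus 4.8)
The plan is to argue by induction on $n=|V(T)|$, relating $\mathcal{S}_3(T)$ to $\mathcal{S}_3(T-\ell)$ for a leaf $\ell$. Write $u_v=\{A-\{v\},B-\{v\},\{v\}\}$ for the singleton-extraction colouring at $v$, so the lemma asks for a Hamilton path of $\mathcal{S}_3(T)$ between two vertices $u_a,u_b$ with $a\neq b$ and $a,b\neq x$. A first observation is that the $u_v$ are precisely the neighbours in $\mathcal{B}_3(T)$ of the unique $2$-colouring $q=\{A,B\}$: a colouring agreeing with $q$ off a single vertex $v$ must isolate $v$ in its own cell. Consequently, by Theorem~\ref{I3}, deleting $q$ from a Hamilton cycle of $\mathcal{B}_3(T)$ already yields a Hamilton path of $\mathcal{S}_3(T)$ between two vertices of the correct shape; the content of the lemma, and the reason for building the path by hand, is that this gives no control over \emph{which} vertices are extracted, whereas we must avoid the prescribed $x$.

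For the base case $n=4$ I would check the two trees directly: $\mathcal{S}_3(P_4)$ is the three-vertex path on $u_2$, $\{\{1,4\},\{2\},\{3\}\}$, $u_1$, and $\mathcal{S}_3(K_{1,3})\cong K_3$ on $u_1,u_2,u_3$; in both, the coincidences among the $u_v$ (such as $u_2=u_4$) let any single $x$ be avoided. For the inductive step, pick a leaf $\ell\neq x$ (possible as $n\ge5$) with neighbour $p$, put $T'=T-\ell$, and record the structure: every $3$-colouring $c$ of $T'$ has exactly two extensions $c^{(1)},c^{(2)}$ to $T$ (place $\ell$ in either cell of $c$ not containing $p$), joined by a remove-$\ell$ fibre edge; the $2$-colouring of $T'$ contributes the single vertex $u_\ell$; and $u_\ell$ is adjacent exactly to the extensions in which $\ell$ shares the singleton cell of a vertex $u'_z$ ($z\neq p$), where $u'_z$ denotes the singleton extraction in $T'$. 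Note that $u_\ell$ is itself a singleton extraction of $T$ (at $\ell$), and that $|V(\mathcal{S}_3(T'))|$ is odd.

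Applying the induction hypothesis to $T'$ with avoided vertex $x$ gives a Hamilton path $P'=d_1d_2\cdots d_m$ of $\mathcal{S}_3(T')$ from $u'_{a'}$ to $u'_{b'}$ with $a',b'\neq x$ and $m\ge3$. Since $a'\neq b'$, at least one endpoint, say $d_1=u'_{a'}$, has $a'\neq p$, so its side-$2$ extension neighbours $u_\ell$. I would then take $u_\ell$ as one endpoint and snake up the fibres: $u_\ell,\ d_1^{(2)},\ d_1^{(1)},\ d_2^{(1)},\ d_2^{(2)},\ d_3^{(2)},\ d_3^{(1)},\dots$, alternately crossing each fibre and stepping along $P'$. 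Labelling as side $1$ of a singleton-extraction column its $T$-singleton-extraction extension, a consistent labelling makes this a Hamilton path on all $2m+1$ vertices, and since $m$ is odd it terminates at side $1$ of $d_m=u'_{b'}$, namely $u_{b'}$. This gives the path from $u_\ell$ to $u_{b'}$, so $a=\ell$ and $b=b'$, both different from $x$, complete the step.

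The main obstacle is that this $K_2$-bundle over $\mathcal{S}_3(T')$ is generally \emph{twisted}, so the side-$1$/side-$2$ steps in the snake need not exist as written. Tracking how a single vertex move relabels the two cells avoiding $p$ shows that a twist occurs across an edge $d_id_{i+1}$ of $P'$ precisely when that move recolours $p$; since a Hamilton path of $\mathcal{S}_3(T')$ must recolour $p$, twists are unavoidable, and each one flips the fibre on which the snake arrives. The crux is therefore to control the \emph{parity} of the number of $p$-recolourings so that the path still ends on the side giving a genuine singleton extraction. I expect to resolve this using the flexibility that $u_\ell$ is adjacent to the side-$2$ extension of \emph{every} singleton-extraction column $u'_z$ of $T'$: attaching $u_\ell$ at a well-chosen such column splits $P'$ and adjusts the parity, while the degenerate coincidences $u_v=u_w$ at size-two colour classes provide further slack. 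The small cases feeding the induction, and the case where $T'$ is a star, I would treat separately.
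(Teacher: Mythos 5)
Your structural setup is correct --- the fibre description of $\mathcal{S}_3(T)$ over $\mathcal{S}_3(T-\ell)$, the identification of the singleton-extraction colourings as precisely the neighbours of the $2$-colouring in $\mathcal{B}_3(T)$, and the base case all check out --- but the proof breaks exactly at the point you say you ``expect to resolve'' the twisting, and that difficulty is not a deferrable technicality: it is the entire content of the lemma. Two things make it worse than a parity bookkeeping issue. First, your snake has no downstream flexibility at all: each fibre is a $K_2$, so once you start $u_\ell, d_1^{(2)}, d_1^{(1)},\ldots$, you must enter each fibre at one vertex and leave from the other, and every subsequent vertex of the walk is forced. Second, an edge $d_id_{i+1}$ of $P'$ that recolours $p$ does not give a twisted matching between the fibres; it gives a \emph{single} edge (joining the two extensions that place $\ell$ in the one cell untouched by the move --- the other cross pairs differ on both $p$ and $\ell$). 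So at every $p$-recolouring the forced snake must happen to arrive at that edge's endpoint, and the induction hypothesis gives you no control over where along $P'$ the $p$-recolourings occur, nor over the parities of the stretches between them. Your two escape hatches also fail in general: inserting $u_\ell$ mid-path requires two singleton-extraction colourings of $T'$ to be consecutive on $P'$, i.e.\ adjacent in $\mathcal{S}_3(T-\ell)$, which never happens (two distinct such partitions either coincide or differ in the placement of at least two vertices), and the coincidences $u_v=u_w$ exist only when one side of the bipartition of $T$ has exactly two vertices. Finally, the star case you set aside cannot be absorbed by the same scheme, since for a star every admissible leaf has the same neighbour $p$; it needs a separate argument (the paper handles it via Theorem~\ref{I3} and the symmetry of the star).

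It is worth seeing how the paper sidesteps precisely this rigidity: for non-stars it deletes \emph{two} leaves $l_1,l_2$ with \emph{distinct} neighbours $p_1\neq p_2$, so every colouring of $T'$ has four extensions forming a $4$-cycle fibre. A $4$-cycle admits several entry/exit pairs (paths of length three around it); a fixed colouring of $T'-p_i$ has at most two extensions, so two consecutive base edges cannot both be bad for the same $p_i$; and consecutive bad edges for $p_1$ then $p_2$ still leave a usable offset structure. That slack is exactly what lets the Choo--MacGillivray threading go through, with the avoidance of $x$ arranged by the choice of $l_1,l_2$ and the vertex $z$ fed into the induction. If you insist on the one-leaf decomposition, you would have to strengthen the induction hypothesis to control the pattern of $p$-recolourings along the Hamilton path, which is a substantially harder statement than the lemma itself.
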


\begin{proof}
 Suppose $T$ is the star on $n+1$ vertices.  By Theorem~\ref{I3K1n}, $\mathcal{B}_3(T)$  is Hamiltonian.  The two neighbours of the two colouring of $T$ in every Hamilton cycle of $\mathcal{B}_3(T)$ are of the form $\{\{s\}, X\cup\{b\}, \{a\}\}$ and  $\{\{s\}, X\cup\{a\}, \{b\}\}$ where $s$ is the vertex of degree $n$ and $a$ and $b$ are leaves.  
By the symmetry of stars, this implies the lemma holds for stars.  
 
We need to establish the result for trees which are not stars. The proof is by induction on the number of vertices.   If $|V(T)|=4$, then $T \cong P_4$ and an appropriate Hamilton path is given in Figure~\ref{B3P4} for any choice of the vertex $x$.
  If $|V(T)|=5$, then either $T \cong P_5$  (and appropriate Hamilton paths are given in Figure~\ref{B3P5} for any choice of the vertex $x$) or $T$ is isomorphic to the tree shown in Figure~\ref{fivevertices} (and two appropriate Hamilton paths in $\mathcal{S}_3(T)$ are given depending on the choice of the vertex $x$).  In all three cases the lemma holds.

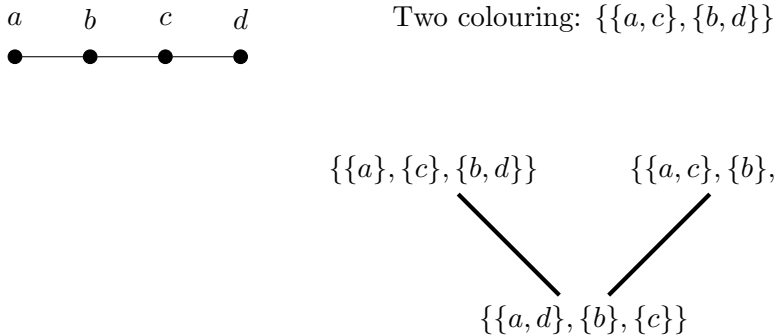
\begin{figure}
\begin{tikzpicture}
	\begin{pgfonlayer}{nodelayer}
		\node [style=vertex] (0) at (-1, 0) {};
		\node [style=vertex] (1) at (0, 0) {};
		\node [style=vertex] (2) at (1, 0) {};
		\node [style=vertex] (3) at (2, 0) {};
		\node [style=textbox] (4) at (-1, .5) {$a$};
		\node [style=textbox] (6) at (0, .5) {$b$};
		\node [style=textbox] (7) at (1, .5) {$c$};
		\node [style=textbox] (8) at (2, .5) {$d$};
	\end{pgfonlayer}
	\begin{pgfonlayer}{edgelayer}
		\draw [style=edge] (0) to (1);
		\draw [style=edge] (1) to (2);
		\draw [style=edge] (2) to (3);
	\end{pgfonlayer}
\end{tikzpicture}
\vspace{-.5in}
 \begin{center}
\begin{tikzpicture}
	\begin{pgfonlayer}{nodelayer}
		\node [style=textbox] (0) at (0, 2) {Two colouring: $\{\{a,c\},\{b,d\}\}$};
		\node [style=textbox] (1) at (-2, 0) {$\{\{a\}, \{c\},\{b,d\}\}$};
		\node [style=textbox] (2) at (0, -2) {$\{\{a,d\},\{b\}, \{c\}\}$};
		\node [style=textbox] (3) at (2, 0) {$\{\{a,c\},\{b\} ,\{d\}\}$};
	\end{pgfonlayer}
	\begin{pgfonlayer}{edgelayer}
		\draw [style=edge, ultra thick] (1) to (2);
		\draw [style=edge, ultra thick] (2) to (3);
	\end{pgfonlayer}
\end{tikzpicture}
\end{center}
\caption{The 3-Stirling colour graph of $P_4$}
\label{B3P4} 
\end{figure}

\begin{figure}
\begin{tikzpicture}
	\begin{pgfonlayer}{nodelayer}
		\node [style=vertex] (-1) at (-2, 0) {};	
		\node [style=vertex] (0) at (-1, 0) {};
		\node [style=vertex] (1) at (0, 0) {};
		\node [style=vertex] (2) at (1, 0) {};
		\node [style=vertex] (3) at (2, 0) {};
		\node [style=textbox] (4) at (-2, .5) {$a$};
		\node [style=textbox] (6) at (-1, .5) {$b$};
		\node [style=textbox] (7) at (0, .5) {$c$};
		\node [style=textbox] (8) at (1, .5) {$d$};
	       \node [style=textbox] (9) at (2, .5) {$e$};
	\end{pgfonlayer}
	\begin{pgfonlayer}{edgelayer}
		\draw [style=edge] (-1) to (0);
		\draw [style=edge] (0) to (1);
		\draw [style=edge] (1) to (2);
		\draw [style=edge] (2) to (3);
	\end{pgfonlayer}
\end{tikzpicture}
\vspace{-.5in}
 \begin{center}
\begin{tikzpicture}
	\begin{pgfonlayer}{nodelayer}
		\node [style=textbox] (0) at (0, 2) {Two colouring: $\{\{a,c,e\},\{b,d\}\}$};
		\node [style=textbox] (1) at (-4, 0.5) {$\{\{a, d\}, \{c\},\{b,e\}\}$};
		\node [style=textbox] (2) at (-4, -1.5) {$\{\{a,d\},\{b\}, \{c, e\}\}$};
		\node [style=textbox] (3) at (-4, -3.5) {$\{\{a,c, e\},\{b\} ,\{d\}\}$};
		\node [style=textbox] (4) at (-4, -5.5) {$\{\{a,c\},\{b,e\} ,\{d\}\}$};
		\node [style=textbox] (5) at (4, 0) {$\{\{a\},\{b,d\}, \{c,e\}\}$};
		\node [style=textbox] (6) at (4, -2) {$\{\{a, e\},\{b,d\}, \{c\}\}$};
		\node [style=textbox] (7) at (4, -4) {$\{\{a, c\},\{b,d\}, \{e\}\}$};
	\end{pgfonlayer}
	\begin{pgfonlayer}{edgelayer}
		\draw [style=edge, ultra thick] (1) to (2);
		\draw [style=edge,  ultra thick] (2) to (3);
		\draw [style=edge, bend left=75, looseness=3.50, thin] (5) to (7);
		\draw [style=edge, bend right=60, looseness=2.50, ultra thick] (1) to (4);
		\draw [style=edge, ultra thick] (5) to (6);
		\draw [style=edge, ultra thick] (6) to (7);
		\draw [style=edge, ultra thick] (7) to (4);
		\draw [style=edge] (5) to (2);
		\draw [style=edge,thin] (3) to (4);
	\end{pgfonlayer}
\end{tikzpicture}

\vspace{2cm}

\begin{tikzpicture}
	\begin{pgfonlayer}{nodelayer}
		\node [style=textbox] (1) at (-4, 0.5) {$\{\{a, d\}, \{c\},\{b,e\}\}$};
		\node [style=textbox] (2) at (-4, -1.5) {$\{\{a,d\},\{b\}, \{c, e\}\}$};
		\node [style=textbox] (3) at (-4, -3.5) {$\{\{a,c, e\},\{b\} ,\{d\}\}$};
		\node [style=textbox] (4) at (-4, -5.5) {$\{\{a,c\},\{b,e\} ,\{d\}\}$};
		\node [style=textbox] (5) at (4, 0) {$\{\{a\},\{b,d\}, \{c,e\}\}$};
		\node [style=textbox] (6) at (4, -2) {$\{\{a, e\},\{b,d\}, \{c\}\}$};
		\node [style=textbox] (7) at (4, -4) {$\{\{a, c\},\{b,d\}, \{e\}\}$};
	\end{pgfonlayer}
	\begin{pgfonlayer}{edgelayer}
		\draw [style=edge, ultra thick] (1) to (2);
		\draw [style=edge, thin] (2) to (3);
		\draw [style=edge, bend left=75, looseness=3.50, thin] (5) to (7);
		\draw [style=edge, bend right=60, looseness=2.50, ultra thick] (1) to (4);
		\draw [style=edge, ultra thick] (5) to (6);
		\draw [style=edge, ultra thick] (6) to (7);
		\draw [style=edge, thin] (7) to (4);
		\draw [style=edge, ultra thick] (5) to (2);
		\draw [style=edge, ultra thick] (3) to (4);
	\end{pgfonlayer}
\end{tikzpicture}
\end{center}
\caption{The 3-Stirling colour graph of $P_5$ with two of its Hamilton paths highlighted}
\label{B3P5} 
\end{figure}

 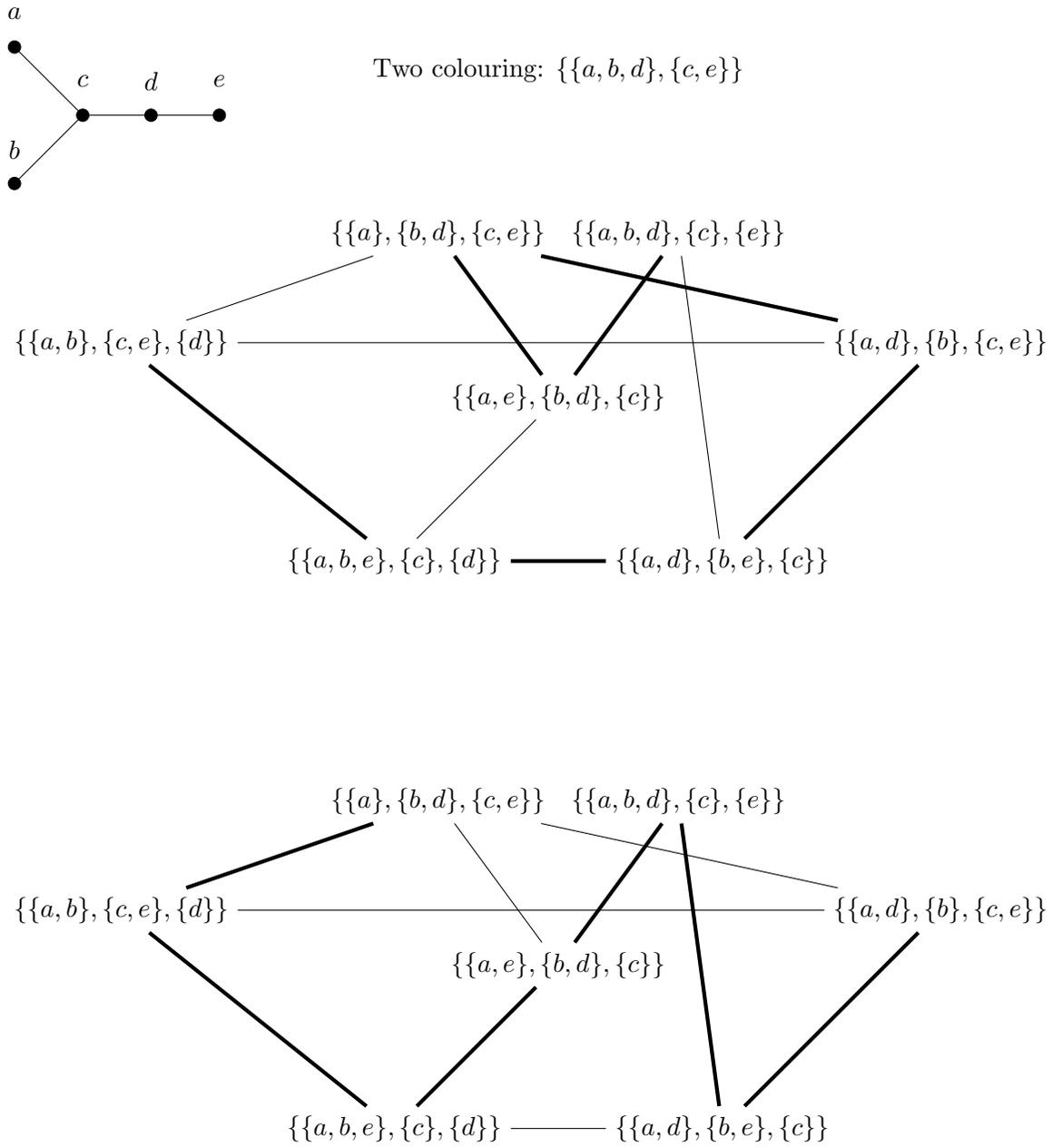
\begin{figure}
  \begin{tikzpicture}
	\begin{pgfonlayer}{nodelayer}
		\node [style=vertex] (0) at (-3, 0) {};
		\node [style=vertex] (1) at (-3, 2) {};
		\node [style=vertex] (2) at (-2, 1) {};
		\node [style=vertex] (3) at (-1, 1) {};
		\node [style=vertex] (4) at (0, 1) {};
		\node [style=textbox] (5) at (0, 1.5) {$e$};
		\node [style=textbox] (6) at (-1, 1.5) {$d$};
		\node [style=textbox] (7) at (-2, 1.5) {$c$};
		\node [style=textbox] (8) at (-3, 2.5) {$a$};
		\node [style=textbox] (9) at (-3, 0.5) {$b$};
	\end{pgfonlayer}
	\begin{pgfonlayer}{edgelayer}
		\draw [style=edge] (1) to (2);
		\draw [style=edge] (0) to (2);
		\draw [style=edge] (2) to (3);
		\draw [style=edge] (3) to (4);
	\end{pgfonlayer}
\end{tikzpicture}

\vspace{-1in}
 \begin{center}
\begin{tikzpicture}[scale=.8]
	\begin{pgfonlayer}{nodelayer}
		\node [style=textbox] (0) at (7, 9) {Two colouring: $\left\{\{a,b,d\},\{c,e\}\right\}$};
		\node [style=textbox] (1) at (-1, 4) {$\left\{\{a,b\},\{c,e\}, \{d\} \right\}$};
		\node [style=textbox] (2) at (4.8, 6) {$\left\{\{a\},\{b, d\}, \{c, e\} \right\}$};
		\node [style=textbox] (3) at (9.2, 6) {$\left\{\{a, b, d\},\{c\}, \{e\} \right\}$};
		\node [style=textbox] (4) at (14, 4) {$\left\{\{a, d\},\{b\}, \{c, e\} \right\}$};
		\node [style=textbox] (5) at (10, 0) {$\left\{\{a, d\},\{b, e\}, \{c\} \right\}$};
		\node [style=textbox] (6) at (7, 3) {$\left\{\{a, e\},\{b, d\}, \{c\} \right\}$};
		\node [style=textbox] (7) at (4, 0) {$\left\{\{a, b, e\},\{c\}, \{d\} \right\}$};
	\end{pgfonlayer}
	\begin{pgfonlayer}{edgelayer}
		\draw [style=edge, thin] (1) to (2);
		\draw [style=edge, thin] (1) to (4);
		\draw [style=edge,  ultra thick] (1) to (7);
		\draw [style=edge,  ultra thick] (2) to (4);
		\draw [style=edge,  ultra thick] (2) to (6);
		\draw [style=edge,  ultra thick] (3) to (6);
		\draw [style=edge, thin] (3) to (5);
		\draw [style=edge, ultra thick] (4) to (5);
		\draw [style=edge,  ultra thick] (7) to (5);
		\draw [style=edge, thin] (7) to (6);
	\end{pgfonlayer}
\end{tikzpicture}
\end{center}

\vspace{2cm}

 \begin{center}
\begin{tikzpicture}[scale=.8]
	\begin{pgfonlayer}{nodelayer}
		\node [style=textbox] (1) at (-1, 4) {$\left\{\{a,b\},\{c,e\}, \{d\} \right\}$};
		\node [style=textbox] (2) at (4.8, 6) {$\left\{\{a\},\{b, d\}, \{c, e\} \right\}$};
		\node [style=textbox] (3) at (9.2, 6) {$\left\{\{a, b, d\},\{c\}, \{e\} \right\}$};
		\node [style=textbox] (4) at (14, 4) {$\left\{\{a, d\},\{b\}, \{c, e\} \right\}$};
		\node [style=textbox] (5) at (10, 0) {$\left\{\{a, d\},\{b, e\}, \{c\} \right\}$};
		\node [style=textbox] (6) at (7, 3) {$\left\{\{a, e\},\{b, d\}, \{c\} \right\}$};
		\node [style=textbox] (7) at (4, 0) {$\left\{\{a, b, e\},\{c\}, \{d\} \right\}$};
	\end{pgfonlayer}
	\begin{pgfonlayer}{edgelayer}
		\draw [style=edge, ultra thick] (1) to (2);
		\draw [style=edge] (1) to (4);
		\draw [style=edge, ultra thick] (1) to (7);
		\draw [style=edge] (2) to (4);
		\draw [style=edge] (2) to (6);
		\draw [style=edge, ultra thick] (3) to (6);
		\draw [style=edge, ultra thick] (3) to (5);
		\draw [style=edge, ultra thick] (4) to (5);
		\draw [style=edge] (7) to (5);
		\draw [style=edge, ultra thick] (7) to (6);
	\end{pgfonlayer}
\end{tikzpicture}
\end{center}
\caption{A tree on five vertices and two Hamilton paths in the 3-Srtirling colour graph of the tree.}
\label{fivevertices} 
\end{figure}
  
Suppose the result is true for any tree with at least 4 vertices, but fewer than $n\ge 6$ vertices. Let $T$ be a tree with $n$ vertices which is not a star, and let $x$ be an arbitrary vertex of $T$.  If $x$ is a leaf of $T$, let $x=l_1$ and $l_2$ be leaves of $T$ with corresponding distinct  neighbours $p_1$ and $p_2$ (that is  $p_1\ne p_2$). Set $z=p_1$.  Otherwise, it is possible select leaves $l_1$ and $l_2$ of $T$ so that $x\notin\{l_1, l_2\}$ and $l_1$ and $l_2$ have corresponding distinct neighbours $p_1$ and $p_2$.  Set $z=x$.  Let $T'=T-\{l_1, l_2\}$.

By the induction hypothesis, $\mathcal{S}_3(T')$ contains a Hamilton path, $P$, for which the end vertices are $\{A'-\{y_1\}, B'-\{y_1\}, \{y_1\}\}$ and $\{A'- \{y_2\}, B'- \{y_2\}, \{y_2\}\}$, where $y_1\ne z$,  $y_2\ne z$ and the two colouring of $T'$ is $\{A',B'\}$. 

Every colouring of $T$ is an extension of exactly one colouring of $T'$.  A given 3-colouring of $T'$ may be extended by choosing to add $l_1$ to exactly one of the two cells of the colouring not containing $p_1$, and add $l_2$ to exactly one of the two cells of the colouring not containing $p_2$.  We note that the four corresponding colourings  (or partitions of $V(T)$ into independent sets) induce a 4-cycle in $\mathcal{S}_3(T)$ in which each edge 
arises from 
colourings that agree on $T - l_i$
for some $i$. Note that by similar logic, the extensions of the 2-colouring of $T'$ using two or three colours also form a 4-cycle in $\mathcal{S}_3(T)$ in which each edge arises from 
colourings that agree on $T - l_i$
for some $i$.  For a colouring $u$ of $T'$, we will call the 4-cycle corresponding to the extensions $u$, $C_u$.


For colourings $u$ and $v$ of $T'$, let $uv$ be an edge in the Hamilton path in $\mathcal{S}_3(T')$ corresponding to colourings that agree on $T' - y$ for some $y$.
Now if $y\notin \{p_1,p_2\}$, then every partition in the four extensions of $u$ agrees with one of the four extensions of $v$ on every vertex except $y$.  Hence there is a (perfect) matching between the vertices of $C_u$ and the vertices of $C_v$. Suppose $y=p_1$.  That is, $u$ is a partition $\{\{p_1\}\cup S_1, S_2, S_3\}$
for some independent sets $S_1, S_2$ and $S_3$, and $v$ is the partition $\{S_1,\{p_1\}\cup S_2, S_3\}$. Then the corresponding extensions of $u$ and $v$ where $l_1$ is assigned the same colour as vertices in $S_3$ (or its own colour if $S_3=\emptyset$) are adjacent.  
In particular, the subgraph induced by the four extensions of $u$ and $v$ where $l_1$ is assigned the same colour as vertices in $S_3$ contains a four cycle.   An identical argument can be made to show if $y=p_2$, the subgraph induced by two of the adjacent extensions of $u$ and two of the extension adjacent of $v$ contains a four cycle.  
Note that, in the extensions of 
a colouring $v$, edges in $C_v$ alternately 
correspond to colourings that agree on $T- l_1$ and  
colourings that agree on $T- l_2$.  Furthermore, any fixed colouring of $T'-p_1$ (respectively $T'-p_2$) has  at most 2 extensions to a colouring of $T'$.  Hence   if $u$, $v$, $w$ are three consecutive vertices on $C$, it can not be the case that both $uv$ and $vw$ correspond to colourings that agree on $T' - p_i$ for some $i \in \{1,2\}$.
Therefore, from the above argument, if $u$, $v$, $w$ are three consecutive vertices on $C$, the subgraph of  $\mathcal{B}_3(T)$ induced by the vertices belonging to $C_u$, $C_v$ and $C_w$ is a supergraph of the graph shown in Figure~\ref{Fig4}.

The above paragraph justifies the following statements which we will use without reference through the remainder of the proof.  Suppose $u$, $v$, $w$ are three consecutive vertices on $C$.
\begin{itemize}
\item If the edge $uv$ corresponds to 
colourings that agree on $T' - q$, where 
$q \not\in \{p_1, p_2\}$,
then there is a (perfect) matching from $C_u$ to $C_v$
\item If the  edge $uv$ corresponds to 
colourings that agree on $T' - q$, where 
$q \in \{p_1, p_2\}$, 
then the subgraph induced by two of the adjacent extensions of $u$ and two of the adjacent extensions of $v$ contains a four cycle.
\item If the edge $uv$ corresponds to
colourings that agree on $T' - q_1$, 
and the edge $vw$ corresponds to
colourings that agree on $T' - q_2$, 
where $\{q_1, q_2\} = \{p_1, p_2\}$,
then the
subgraph induced by the vertices of $C_u$, $C_v$ and $C_w$ is a supergraph of the graph shown in Figure~\ref{Fig4}.
\end{itemize} 

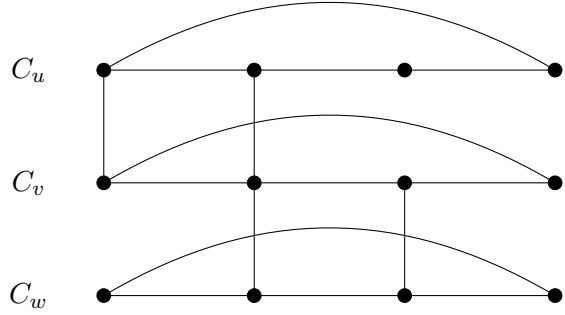
\begin{figure}
\begin{center}
\begin{tikzpicture}
	\begin{pgfonlayer}{nodelayer}
		\node [style=vertex] (0) at (-2, 2) {};
		\node [style=vertex] (1) at (0, 2) {};
		\node [style=vertex] (2) at (2, 2) {};
		\node [style=vertex] (3) at (-2, 0.5) {};
		\node [style=vertex] (4) at (0, 0.5) {};
		\node [style=vertex] (5) at (2, 0.5) {};
		\node [style=vertex] (6) at (-2, -1) {};
		\node [style=vertex] (7) at (0, -1) {};
		\node [style=vertex] (8) at (2, -1) {};
		\node [style=textbox] (9) at (-3, 2) {$C_u$};
		\node [style=textbox] (10) at (-3, -1) {$C_w$};
		\node [style=textbox] (11) at (-3, 0.5) {$C_v$};
		\node [style=vertex] (12) at (4, 2) {};
		\node [style=vertex] (13) at (4, -1) {};
		\node [style=vertex] (14) at (4, 0.5) {};
	\end{pgfonlayer}
	\begin{pgfonlayer}{edgelayer}
		\draw [style=edge] (3) to (4);
		\draw [style=edge] (4) to (5);
		\draw [style=edge] (0) to (1);
		\draw [style=edge] (1) to (2);
		\draw [style=edge] (6) to (7);
		\draw [style=edge] (7) to (8);
		\draw [style=edge] (0) to (3);
		\draw [style=edge] (1) to (4);
		\draw [style=edge] (4) to (7);
		\draw [style=edge] (5) to (8);
		\draw [style=edge] (2) to (12);
		\draw [style=edge, bend left] (0) to (12);
		\draw [style=edge] (5) to (14);
		\draw [style=edge] (8) to (13);
		\draw [style=edge, bend left] (3) to (14);
		\draw [style=edge, bend left] (6) to (13);
	\end{pgfonlayer}
\end{tikzpicture}
\end{center}\label{Fig4}
\caption{A subgraph of the extension of three colourings from a path $uvw$ in $C$ where $uv$ and $vw$ correspond to colourings that agree on 
$T-p_i$ and $T-p_j$ (with $i\ne j$), respectively. }\label{twoinarow}
\end{figure}

\medskip\noindent
The constructive proof used to show that the $3$-colour graph of a tree is Hamiltonian \cite{CM} can now be used.  We provide a proof here for completeness. 

Label the vertices in $C$ in order $v_1, v_2, \ldots , v_k$, where $v_1$ corresponds to the two colouring, say $\{A, B\}$, of $T'$.  

For $i=1, 2, \ldots k$, define $C_i$ sequentially as follows.

\begin{itemize}
\item Define $C_1=C_{v_1}$.

\item Let $s_1$ to be the 2-colouring of $T$.  
We note $s_1$ has a neighbour, say $s_2'$, in $V(C_{v_2})
$. Define the two neighbours of $s_1$ in $C_{v_1}$ as $t'$ and $t''$, where $t'$ corresponds to the extension of $v_1$ where $\{l_1\}$ is a cell and  $t''$ corresponds to the extension of $v_1$ where $\{l_2\}$ is a cell .  If $t'$  has a neighbour  in $V(C_{v_2})$, define $t_1=t'$ and define the neighbour of $t_1$ in $V(C_{v_2})$ as $t_2'$. Otherwise, define $t_1=t''$ and there is a neighbour of $t_1$ in $V(C_{v_2})$ which we define as $t_2'$.

Form $C_2$ as follows. Take $C_1$ and delete the edge $t_1s_1$.  Add the edges $s_1s_2'$ and $t_1t_2'$, together with the path of length three around $C_{v_2}$ which start at $s'_2$ and ends at $t_2'$.

\item For $1<i<k$, suppose $C_{i}$, $s'_i$ and $t'_i$ have been defined in the previous step. Consider the path of length three around $C_{v_i}$ which start at $s'_i$ and ends at $t_i'$.  We note that by this construction, this path is in $C_i$. Find an adjacent pair of vertices on this path with neighbours in $V(C_{v_{i+1}})$ and define them to be $s_i$ and $t_i$ in that order.  (Note that it is possible to have $s_i=s_i'$ or $t_i=t_i'$).  Define the neighbours of $s_i$ and $t_i$ in $V(C_{v_{i+1}})$ to be $s'_{i+1}$ and $t'_{i+1}$, respectively. 

Form $C_{i+1} $ as follows. Take $C_i$ and delete the edge $t_is_i$.  Add the edges $s_is_{i+1}'$ and $t_it_{i+1}'$, together with the path of length three in $C_{v_{i+1}}$ which start at $s'_{i+1}$ and ends at $t_{i+1}'$.

\end{itemize}

By this construction,  $C_k$ is a Hamilton cycle of $\mathcal{B}_3(T)$. It remains to show that $C_k$ satisfies the conclusion of the lemma.  The neighbours of $s_1$, the two colouring of $T$, in $C_k$ are the extension of $v_1$ where $\{l_i\}$ is a cell (for some $i=1,2$) and the extension of $v_2$ where $\{y\}$ is a cell, for some $y\ne z$.   

If $x$ is not a leaf, then $x\ne l_1$, $x\ne l_2$ and, as $x=z$, $x\ne y$. In this case, the lemma follows.  

Suppose $x$ is a leaf.  Then $x=l_1$.  As $y\in V(T')$, $x\ne y$. Furthermore $z=p_1$ implies $y\ne p_1$.  It follows that the extensions of $v_2$ with cell $\{x,y\}$ is a neighbour of $t'$ (the extension of $v_1$ where $\{x\}$ is a cell).  It follows that $t'=t_1$ and therefore   a neighbour of the two colouring of $T$ in $C_k$ is $t''$ (the extension of $v_1$ where $\{l_2\}$ is a cell).  As $l_2\ne x$, this completes the proof. 
\end{proof}

\begin{theorem}
For any tree $T$ with at least $5$ vertices, $\mathcal{S}_4(T)$ is Hamiltonian.
\label{Thm$S_4$}
\end{theorem}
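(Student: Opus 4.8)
The plan is to argue by induction on $n=|V(T)|$. For the base case $n=5$ there is nothing new to prove: when $n=5$ we have $\mathcal{S}_4(T)=\mathcal{S}_{n-1}(T)$, which is Hamiltonian by Lemma~\ref{LemmaBaseCase}. So suppose $n\ge 6$ and that the result holds for all trees with fewer vertices (and at least $5$). Choose a leaf $x$ of $T$, let $p$ be its neighbour, and set $T'=T-x$, a tree on $n-1\ge 5$ vertices. Two facts about $T'$ drive the argument: $\mathcal{S}_4(T')$ is Hamiltonian by the induction hypothesis, and $\mathcal{B}_4(T')$ is Hamiltonian by Theorem~\ref{I3} (since $T'$ has more than $4$ vertices), so Lemma~\ref{newB3} supplies a Hamilton path of $\mathcal{S}_3(T')$ whose two ends split a single leaf off the $2$-colouring and can be forced to avoid $p$.

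The first real step is to understand how $\mathcal{S}_4(T)$ sits over the partitions of $T'$. Since $x$ is a leaf adjacent only to $p$, every partition in $\mathcal{S}_4(T)$ restricts to a partition of $T'$ using either three or four cells, and I split $V(\mathcal{S}_4(T))$ accordingly. In the \emph{singleton part} $A$ the cell $\{x\}$ stands alone; deleting $x$ gives a bijection between $A$ and the $3$-partitions of $T'$, and checking adjacencies shows that $A$ induces a copy of $\mathcal{S}_3(T')$. In the \emph{non-singleton part} $B$ the vertex $x$ shares a cell; over each $4$-partition $v$ of $T'$ the three ways of placing $x$ in a cell avoiding $p$ form a triangle $F_v\cong K_3$, and deleting $x$ collapses $B$ onto $\mathcal{S}_4(T')$. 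I would then record the three kinds of edges: inside each $F_v$ (a complete triangle); between $F_v$ and $F_{v'}$ for an edge $vv'$ of $\mathcal{S}_4(T')$; and between $A$ and $B$ (an edge from a singleton partition to the triangle obtained by splitting off one further leaf $y\ne p$).

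With the decomposition in hand I would assemble a Hamilton cycle from three ingredients. Using the Hamilton cycle of $\mathcal{S}_4(T')$ from the induction hypothesis as a backbone, thread the triangles $F_v$ into a single spanning path of $B$: each $F_v$ is entered at one vertex and left at a distinct vertex, and the missing third vertex is collected inside the triangle, a routing of the same flavour as Lemma~\ref{HamPath}. Using Lemma~\ref{newB3} applied to $T'$ with distinguished vertex $p$, take a Hamilton path of $A\cong\mathcal{S}_3(T')$ whose ends are the split-off-a-leaf partitions $\{A'-a,B'-a,\{a\},\{x\}\}$ and $\{A'-b,B'-b,\{b\},\{x\}\}$ with $a,b\ne p$, where $\{A',B'\}$ is the $2$-colouring of $T'$. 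Finally, splice the $A$-path into the $B$-traversal along two $A$--$B$ interface edges, the endpoint control from Lemma~\ref{newB3} being exactly what guarantees these splice edges exist and meet the $B$-traversal at usable places. Insisting on a Hamilton \emph{path} of $A$ rather than a cycle is essential, since for $T'$ an even star $\mathcal{S}_3(T')$ is not Hamiltonian (Theorem~\ref{star}); Lemma~\ref{newB3} is precisely the tool that survives this case.

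The crux, and where I expect the bookkeeping to concentrate, is the fibre stitching in $B$ when a backbone edge $vv'$ of $\mathcal{S}_4(T')$ moves the special vertex $p$ between cells. For such an edge the natural correspondence between $F_v$ and $F_{v'}$ is only a matching of size two: the two placements of $x$ in ``spectator'' cells match up, but the placement of $x$ in the cell $p$ leaves, and in the cell $p$ enters, have no neighbour in the opposite fibre. One must therefore route through each such triangle by entering and leaving on the two matched vertices and collecting the trapped vertex internally, and in particular handle the case of two consecutive $p$-edges on the backbone, for which I would fall back on the three-fibre gadget of Figure~\ref{twoinarow} exactly as in the proof of Lemma~\ref{newB3} and of the Hamiltonicity of the $3$-colour graph of a tree in \cite{CM}. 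The remaining cases (a backbone edge not involving $p$ induces a perfect matching between fibres, so the routing is free) and the two $A$--$B$ splices are then routine, completing the induction.
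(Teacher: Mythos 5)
Your proposal is correct and follows essentially the same route as the paper: induction with the $n=5$ base from Lemma~\ref{LemmaBaseCase}, the split of $\mathcal{S}_4(T)$ into a singleton part isomorphic to $\mathcal{S}_3(T')$ traversed via the endpoint-controlled path of Lemma~\ref{newB3} and $K_3$-fibres over $\mathcal{S}_4(T')$ threaded along the inductive Hamilton cycle, with the size-two matchings and the Figure~\ref{twoinarow} gadget handling edges that move $p$. The only place the paper works harder than your ``routine'' label suggests is the final splice, where the chosen exit vertex $s_k$ of the last fibre may be forced adjacent to the entry vertex $s_1$, requiring a re-routing argument through $C_{v_{k-1}}$ (using that three consecutive backbone edges cannot all correspond to $T'-p$), but this is bookkeeping within your framework rather than a missing idea.
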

\begin{proof}
The proof is by induction on the number of vertices of $T$.  If $|V(T)|=5$, then Lemma~\ref{LemmaBaseCase} implies $S_4(T)$ is Hamiltonian.
Suppose $T$ is a tree with $n\ge 6$ vertices and suppose further  that for all trees with more than 5 vertices, but fewer than $n$ vertices, the result holds.  

Choose a leaf $l$ of $T$ and define $T'=T-l$.  Let $p$ be the vertex adjacent to $l$ in $T$.

Each  colouring of the vertices of $T$ that uses exactly 4 colours is an extension of exactly one colouring of $T'$ that uses either 3 or 4 colours.   Colourings that extend vertices of $S_3(T')$ correspond to those where $l$ is the only vertex of its colour.  For each colouring in $S_4(T')$, $l$ can be added to exactly three sets of the partition.  The three corresponding colourings of $V(T)$ form a 3-cycle in $\mathcal{S}_4(T)$ in which each edge arises from colourings that agree on $T-l$.
We will call the 3-cycle corresponding to the extensions of the colouring $u$, $C_u$.  

Let $\{A,B\}$ be the 2-colouring of $T'$. By Lemma~\ref{newB3}, there are vertices $a$ and $b$ and a Hamilton path $P$ in $S_3(T')$ for which the end vertices are $\{A-\{a\}, B-\{a\}, \{a\}\}$ and $\{A-\{b\}, B-\{b\}, \{b\}\}$ where $p\ne a$ and $p\ne b$.  It follow that there is a path $P^*$ in $S_4(T)$ which starts at $\{A-\{b\}, B-\{b\}, \{b\}, \{l\}\}$ and ends at  $\{A-\{a\}, B-\{a\}, \{a\}, \{l\}\}$ that contains all of the colourings in which $\{l\}$ is a cell. 

By the induction hypothesis, there is a Hamilton cycle, $C$, in $S_4(T')$.  For each edge $uv$ we consider the edges between $C_u$ and $C_v$  arising from colourings that agree on $T'-y$.
If $y\ne p$ then every colouring among the three extensions of $u$ agrees with exactly one of the extensions of $v$ on every vertex except $y$.  Hence there is a (perfect) matching between the vertices of $C_u$ and the vertices of $C_v$. If $y=p$, then there will be two edges between $C_u$ and $C_v$.  Specifically, if $u$ corresponds to $\{A_1\cup\{p\}, A_2, A_3, A_4\}$ and $v$ corresponds to $\{A_1, A_2\cup\{p\}, A_3, A_4\}$, then $\{A_1\cup\{p\}, A_2, A_3\cup\{l\}, A_4\}$ is adjacent to $\{A_1, A_2\cup\{p\}, A_3\cup\{l\}, A_4\}$ and $\{A_1\cup\{p\}, A_2, A_3, A_4\cup\{l\}\}$ is adjacent to $\{A_1, A_2\cup\{p\}, A_3, A_4\cup\{l\}\}$.  
Note that, if there is a path $uvw$ in $C$ where both $uv$ and $vw$ arise from colourings that agree on $T'-p$,
then the endpoints of the matching edges are offset in the sense that the extensions of $u$, $v$ and $w$ have a subgraph of the form shown in Figure~\ref{twoinarow}.

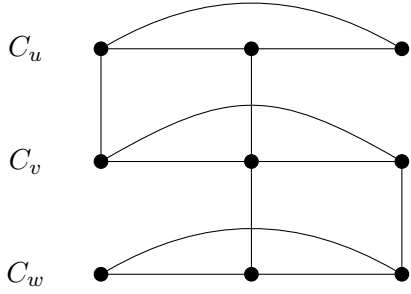
\begin{figure}
\begin{center}
\begin{tikzpicture}
	\begin{pgfonlayer}{nodelayer}
		\node [style=vertex] (0) at (-2, 2) {};
		\node [style=vertex] (1) at (0, 2) {};
		\node [style=vertex] (2) at (2, 2) {};
		\node [style=vertex] (3) at (-2, 0.5) {};
		\node [style=vertex] (4) at (0, 0.5) {};
		\node [style=vertex] (5) at (2, 0.5) {};
		\node [style=vertex] (6) at (-2, -1) {};
		\node [style=vertex] (7) at (0, -1) {};
		\node [style=vertex] (8) at (2, -1) {};
		\node [style=textbox] (9) at (-3, 2) {$C_u$};
		\node [style=textbox] (10) at (-3, -1) {$C_w$};
		\node [style=textbox] (11) at (-3, 0.5) {$C_v$};
	\end{pgfonlayer}
	\begin{pgfonlayer}{edgelayer}
		\draw [style=edge, bend left, looseness=1.25] (3) to (5);
		\draw [style=edge, bend left] (6) to (8);
		\draw [style=edge, bend left] (0) to (2);
		\draw [style=edge] (3) to (4);
		\draw [style=edge] (4) to (5);
		\draw [style=edge] (0) to (1);
		\draw [style=edge] (1) to (2);
		\draw [style=edge] (6) to (7);
		\draw [style=edge] (7) to (8);
		\draw [style=edge] (0) to (3);
		\draw [style=edge] (1) to (4);
		\draw [style=edge] (4) to (7);
		\draw [style=edge] (5) to (8);
	\end{pgfonlayer}
\end{tikzpicture}
\end{center}
\caption{A subgraph of the extension of three colourings from a path $uvw$ in $C$ where both $uv$ and $vw$ correspond to colourings that agree on $T'-p$. }\label{twoinarow}
\end{figure}

Label the vertices of $C$ in order as $v_1v_2\ldots v_k$ with $v_1$ corresponding to the partition $\{A-\{a, b\}, B-\{a,b\}, \{a\}, \{b\}\}$ of the vertices of $T'$.  Without loss of generality assume $p\in B$.  Set $s_1=\{(A-\{a, b\})\cup \{l\}, B-\{a,b\}, \{a\}, \{b\}\}$ (in $C_{v_1}$).  This vertex either has a neighbour in $C_{v_2}$ or $C_{v_k}$.  With out loss of generality, $s_1$ has a neighbour in $C_{v_2}$.  For $i=2, 3,\ldots, k$, choose $s_i$, $t_i$ as described below (where indices are taken modulo $k$).
\begin{itemize}
\item Suppose  $s_{i-1}$ has been previously chosen.  Select $t_{i}$ to be the neighbour of $s_{i-1}$ in $C_{v_{i}}$.
\item Select $s_{i}$ to be a vertex in $C_{v_{i}}$, different from $t_i$ who has a neighbour in $C_{v_{i+1}}$. If possible (under this constraint), select $s_k$ so that $s_k$ is not adjacent to $s_1$.
\end{itemize}
For each $i$, $2\le i \le k$, there is a path, $P_i$, of length 2 containing all the vertices in $C_{v_{i}}$ from $s_i$ to $t_i$.  These paths, together with the edges of the form $s_it_{i+1}$ for $i=1, 2, \ldots, k-1$,  can be used to form a path $P^{**}$ starting at $s_1$ and ending at $s_k$ containing all the vertices belonging to $C_{v_2}, C_{v_3}, \ldots, C_{v_{k-1}}$ and $C_{v_{k}}$.

If $s_k$ is not adjacent to $s_1$, then without loss of generality $s_k$ is adjacent to $\{A-\{a, b\}, B-\{a,b\}, \{a,l\}, \{b\}\}$. The sequence $\{A-\{a, b\}, B-\{a,b\}, \{a,l\}, \{b\}\}$ concatenated with $P^*$ concatenated with $\{A-\{a, b\}, B-\{a,b\}, \{a\}, \{b,l\}\}$ concatenated with $P^{**}$, is a Hamilton cycle of $S_4(T)$.

If $s_k$ is adjacent to $s_1$, then (by the restriction that we must avoid this case if possible) it must be that there are only two edges with one end in $C_{v_k}$ and the other end in $C_{v_1}$ and $t_k$ must be incident with one of these edges.  Furthermore the edge $v_kv_1$ corresponds to 
colourings that agree on $T'-p$.
By symmetry (as $s_k$ is adjacent to $s_1$), we may assume the edge $v_1v_2$ also corresponds to
colourings that agree on $T'-p$. 
Hence  $v_1$, $v_2$ and $v_k$ correspond to
colourings that agree on $T'-p$. Observe that there are at most 3 extensions of any fixed colouring of $T'-p$ to a colouring of $T'$. Recalling that $C$ is a Hamilton cycle in $S_4(T')$, it therefore can not also be the case that the edge $v_{k-1}v_k$ corresponds to 
colourings that agree on $T'-p$.
 Hence there is a matching from $C_{v_{k-1}}$ to $C_{v_k}$.  Set $s'_1=s_1$.  For $2\le i\le k-2$ set $s'_i=s_i$ and $t_i'=t_i$. Choose $s_{k-1}'$ so that $s_{k-1}'$, $s_{k-1}$ and $t_{k-1}$ are the vertices of $C_{v_{k-1}}$.  Choose $t_k'$ to be the neighbour of $s_{k-1}'$ in $C_{v_k}$.  Finally set $s_k'=t_k$. For each $i$, $2\le i \le k$, there is a path, $P_i$, of length 2 containing all the edges from $s_i'$ to $t_i'$.  These paths together with the edges of the form $s_i't_{i+1}'$ (for $i=1, 2, \ldots, k-1$) can be used to form a path $P^{***}$ starting at $s_1'$ and ending at $s_{k}'$ containing all the vertices in $C_{v_2}, C_{v_3}, \ldots, C_{v_{k-1}}$ and $C_{v_{k}}$. Suppose, without loss of generality that $s_{k}'$ is adjacent to  $\{A-\{a, b\}, B-\{a,b\}, \{a,l\}, \{b\}\}$. 
The, the sequence $\{A-\{a, b\}, B-\{a,b\}, \{a,l\}, \{b\}\}$, $P^*$,  $\{A-\{a, b\}, B-\{a,b\}, \{a\}, \{b,l\}\}$, $P^{**}$ is a Hamilton cycle of $S_4(T)$.
\end{proof}

In what follows, we make use of a theorem of Choo and MacGillivray \cite{CM} about the existence of Hamilton cycles in graphs with a special structure.  A {\it $C$-graph with vertex  partition
$F_0, F_1, \ldots, F_{N-1}$} is a graph $G$ such that there is a partition $F_0, F_1, \ldots, F_{N-1}$
of $V(G)$ in which, for $i = 0, 1, \ldots, N-1$, the subgraph
induced by $F_i$ is a Hamilton connected graph with at least three vertices.
If $G$ is a $C$-graph with vertex  partition
$F_0, F_1, \ldots, F_{N-1}$, then we define $F_N = F_0$.

If $X$ and $Y$ are disjoint subsets of the vertices of a graph $G$, then the symbol $[X, Y]$ is used to denote the set of edges of $G$ with one end in $X$ and the other end in $Y$.




\begin{corollary} {\rm \cite{CM}}
Let $G$ be a $C$-graph with vertex partition $F_0, F_1, \ldots, F_{N-1}$.
Suppose that, for $j = 0, 1, \ldots, N-1$, the set $[F_j, F_{j+1}]$ contains at least two
vertex disjoint edges. If there exists $i$, $0 \leq i \leq N-1$, such that $[F_i, F_{i+1}]$ contains at least three
vertex disjoint edges, then $G$ is Hamiltonian. Further, the Hamilton cycle is constructed by finding, for $i = 0, 1, \ldots, N-1$, a
Hamilton path through the subgraph induced by $F_0, F_1, \ldots, F_{N-1}$ that begin at an end of a specific edge in $[F_{i-1}, F_i]$ and terminate at an end of a specific edge in $[F_i, F_{i+1}]$. 
\label{Cor:3edges}
\end{corollary}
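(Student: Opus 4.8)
The plan is to realize the Hamilton cycle as an alternating concatenation of \emph{crossing} edges between consecutive parts and Hamilton paths through the parts. Concretely, I would select, for each $j = 0, 1, \ldots, N-1$, a single edge $e_j \in [F_j, F_{j+1}]$, with endpoints $u_j \in F_j$ and $w_{j+1} \in F_{j+1}$. Reading around the cycle, the part $F_j$ is then entered at the vertex $w_j$ (the $F_j$-end of $e_{j-1}$) and exited at the vertex $u_j$ (the $F_j$-end of $e_j$). Provided that $w_j \neq u_j$ for every $j$, Hamilton-connectedness of the subgraph induced by $F_j$ (which has at least three vertices) supplies a Hamilton path of that subgraph from $w_j$ to $u_j$; stringing these $N$ paths together with the $N$ crossing edges $e_0, \ldots, e_{N-1}$ then yields a Hamilton cycle of $G$. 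Thus the entire problem reduces to choosing the edges $e_j$ so that, in each part, the incoming and outgoing endpoints are distinct.

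I would make this selection greedily around the cycle, deferring the distinguished bundle to the end. Relabel so that $[F_{N-1}, F_0]$ is the bundle guaranteed to contain three pairwise vertex-disjoint edges. First pick $e_0 \in [F_0, F_1]$ arbitrarily; this fixes $u_0$ (the exit of $F_0$) and $w_1$ (the entry of $F_1$). Then, for $j = 1, 2, \ldots, N-2$ in turn, the entry vertex $w_j$ of $F_j$ is already determined by the previously chosen $e_{j-1}$, and I must choose $e_j \in [F_j, F_{j+1}]$ whose $F_j$-endpoint $u_j$ avoids the single forbidden vertex $w_j$. Since $[F_j, F_{j+1}]$ contains two vertex-disjoint edges, their two $F_j$-endpoints are distinct, so at least one of them differs from $w_j$; I take the corresponding edge as $e_j$. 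After this sweep, every entry/exit pair $(w_j, u_j)$ is fixed and compatible, apart from the data still undetermined at the distinguished bundle.

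The only remaining constraints involve the distinguished bundle $[F_{N-1}, F_0]$, and this is the crux of the argument. Choosing $e_{N-1}$ must simultaneously make its $F_{N-1}$-endpoint $u_{N-1}$ differ from the already-fixed entry $w_{N-1}$ of $F_{N-1}$, and make its $F_0$-endpoint $w_0$ differ from the already-fixed exit $u_0$ of $F_0$. Here two disjoint edges would not suffice: a single bad choice could be forced on each side, and the two forced choices need not coincide. But among three pairwise vertex-disjoint edges, whose $F_{N-1}$-endpoints are three distinct vertices and whose $F_0$-endpoints are likewise three distinct vertices, at most one edge has $F_{N-1}$-endpoint equal to $w_{N-1}$ and at most one has $F_0$-endpoint equal to $u_0$. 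Hence at least one of the three edges is admissible on both ends. Selecting that edge as $e_{N-1}$ completes a consistent assignment with $w_j \neq u_j$ for all $j$, and the construction of the first paragraph produces the desired Hamilton cycle. The main obstacle is precisely this wrap-around compatibility; the hypothesis of three disjoint edges at one index is exactly what defeats the parity-type obstruction that could otherwise prevent the cycle from closing.
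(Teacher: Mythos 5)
Your proof is correct. The paper does not actually prove this corollary---it is quoted from \cite{CM}---but your argument is essentially the intended one: pick one crossing edge $e_j \in [F_j,F_{j+1}]$ per consecutive pair, using the two disjoint edges to avoid the single forbidden entry vertex at each intermediate part and the three disjoint edges at the distinguished bundle to satisfy both wrap-around constraints at once, then link the chosen edges by Hamilton paths supplied by Hamilton-connectedness of each part (which needs exactly the distinctness of entry and exit vertices you enforce). This also yields the ``Further'' clause on the endpoints of the Hamilton paths, which is the form in which the corollary is applied later in the paper.
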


\begin{lemma}
Let $T$ be a tree on at least $k+2 \geq 7$ vertices and let $l$ be a leaf of $T$.  Suppose that $f_0, f_1, \ldots, f_{N-1}$ is a Hamilton cycle in $S_k(T-l)$.  For $i = 0, 1, \ldots, N-1$, let $F_i$ be the set of extensions of $f_i$ to a $k$-colouring of $T$.  Let $H$ be the subgraph of $S_k(T)$ induced by $F_0 \cup F_1 \cup \cdots \cup F_{N-1}$.
Then
\begin{description}
\item {(i)} $N \geq k-1 \geq 3$;
\item {(ii)} for $i = 0, 1, \ldots, N-1$, the subgraph of $H$ induced by $F_i$ is a complete graph on $k-1$ vertices;
\item {(iii)} for $i = 0, 1, \ldots, N-1$, the set $[F_i, F_{i+1}]$ contains at least three vertex disjoint edges;
\item{(iv) $H$ is a $C$-graph;
\item (v)} For any edge $xy$ in the subgraph of $H$ induced by $F_i$, there is a Hamilton cycle in $H$ that contains the edge $xy$.
\end{description}
\label{LemmaC-graph}
\end{lemma}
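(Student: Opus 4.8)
The plan is to establish the five parts in an order that front-loads the local structure, since (iv) is immediate once (i) and (ii) are in hand, and (v) is where the substantive work takes place.

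First I would pin down the structure of each $F_i$. A vertex $f_i$ of $S_k(T-l)$ is a partition of $V(T-l)$ into exactly $k$ nonempty independent sets, and to extend it to a $k$-colouring of $T$ I must place the leaf $l$ into one of these cells other than the one containing its neighbour $p$ (placing $l$ in a new cell would use $k+1$ colours). This gives exactly $k-1$ extensions, so $|F_i|=k-1$; moreover any two extensions of $f_i$ agree on $T-l$ and are therefore adjacent in $S_k(T)$. Hence the subgraph induced by $F_i$ is $K_{k-1}$, which is part (ii); as $k\ge 5$ this is a complete graph on at least $4\ge 3$ vertices, so it is Hamilton connected, and together with (i) this yields part (iv) directly from the definition of a $C$-graph. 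For (i), let $u$ be any leaf of $T-l$ and condition on whether $\{u\}$ is a cell: the $k$-colourings of $T-l$ in which $u$ is not a singleton are obtained by adjoining $u$ to one of the $k-1$ cells of a $k$-colouring of $T-l-u$ not containing its neighbour, so $N\ge (k-1)s'$, where $s'$ is the number of $k$-colourings of $T-l-u$. Since $T-l-u$ is a tree on at least $k$ vertices it has at least one partition into exactly $k$ independent sets (start from its proper $2$-colouring and repeatedly split off a singleton until there are exactly $k$ cells), so $s'\ge 1$ and $N\ge k-1\ge 3$.

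Next I would prove (iii). Fix $i$ and let $y$ be the unique vertex of $T-l$ whose cell differs between $f_i$ and $f_{i+1}$. If $y\ne p$, then $p$ occupies the same cell in both partitions, so the $k-1$ admissible cells for $l$ correspond between $f_i$ and $f_{i+1}$; placing $l$ in corresponding cells produces, for each admissible cell, two extensions that agree off $y$ and are therefore adjacent, giving a perfect matching of size $k-1\ge 4$ between $F_i$ and $F_{i+1}$. The delicate case is $y=p$, which parallels the analysis of the case $y=p$ in the proof that $\mathcal{S}_4(T)$ is Hamiltonian: here $p$ moves between two cells, but the remaining $k-2$ cells $E_1,\dots,E_{k-2}$ are common to $f_i$ and $f_{i+1}$, and placing $l$ in $E_j$ on both sides gives two extensions that agree off $p$, hence adjacent. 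These $k-2\ge 3$ edges are vertex-disjoint. In either case $[F_i,F_{i+1}]$ contains at least three vertex-disjoint edges.

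Finally, for (v), parts (i)–(iv) show that $H$ is a $C$-graph in which every $[F_j,F_{j+1}]$ carries at least three vertex-disjoint edges, so Corollary~\ref{Cor:3edges} already gives that $H$ is Hamiltonian. To force a prescribed edge $xy$ with $x,y\in F_i$, I would use the construction inside that corollary, which assembles the cycle from Hamilton paths across each $F_j$ running between endpoints of chosen connecting edges. The surplus of disjoint edges in $[F_{i-1},F_i]$ and $[F_i,F_{i+1}]$ supplies at least three candidate $F_i$-endpoints on each side, so I can choose the entry vertex $a$ and exit vertex $b$ of $F_i$ with $a\notin\{x,y\}$ (at most two of the three candidates lie in $\{x,y\}$), and hence with $\{a,b\}\ne\{x,y\}$. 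Since $F_i$ induces $K_{k-1}$ on at least four vertices, there is a Hamilton path of $F_i$ from $a$ to $b$ traversing $xy$ (insert $xy$ immediately after $a$, or at the end if $b\in\{x,y\}$), and feeding this path into the corollary's construction yields a Hamilton cycle of $H$ containing $xy$. I expect (v) to be the main obstacle: the one configuration that must be avoided is having $xy$ double as the pair of endpoints of a Hamilton path of $K_{k-1}$, impossible once $k-1\ge 3$, and the extra disjoint connecting edges guaranteed by (iii) are precisely what give the room to sidestep it; the $y=p$ case of (iii) is the only other step that needs genuine care.
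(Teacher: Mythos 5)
Your treatment of (i)--(iv) is essentially the paper's own proof: the same count of $k-1$ pairwise-adjacent extensions per $f_i$ (giving $K_{k-1}$ and the $C$-graph property), the same two-case analysis of $[F_i,F_{i+1}]$ according to whether the vertex whose cell changes is the neighbour of $l$ (yielding $k-1\geq 4$ or $k-2\geq 3$ vertex-disjoint edges), and the same lower bound $N\geq k-1$ obtained from extensions of a $k$-colouring of the tree minus two leaves. These parts are correct.

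The gap is in (v). You write that you can ``choose the entry vertex $a$ and exit vertex $b$ of $F_i$'' from among the ends of the disjoint edges in $[F_{i-1},F_i]$ and $[F_i,F_{i+1}]$. But Corollary~\ref{Cor:3edges} is used as a black box: its statement says the Hamilton cycle is assembled from Hamilton paths running between ends of \emph{specific} edges of the cuts, edges selected by its own internal construction; nothing in the statement lets you prescribe which connecting edges it uses, so the step where you steer the construction to get $a\notin\{x,y\}$ is not licensed by the tools available. To justify it you would have to reopen and re-derive the Choo--MacGillivray construction, checking that your local choices can be made globally consistent around the cycle. The paper sidesteps exactly this difficulty with a device your argument is missing: delete from $H$ every edge of $[F_{i-1},F_i]\cup[F_i,F_{i+1}]$ incident with $x$, and apply the corollary to the resulting graph $H'$. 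Since all deleted edges share the vertex $x$, each of those two cuts retains at least two vertex-disjoint edges by (iii); and since $N\geq 3$ by (i), some untouched cut still carries three vertex-disjoint edges, so the corollary's hypotheses hold for $H'$. Then, whatever specific edges its construction selects, their ends in $F_i$ avoid $x$, so the Hamilton path through $F_i$ has both endpoints different from $x$, and your final rerouting step -- which is correct, since a Hamilton path of a complete graph on at least four vertices between prescribed ends $a,b$ can be made to contain $xy$ whenever $\{a,b\}\neq\{x,y\}$ -- completes the proof. So your plan for (v) is right in spirit, but as written it rests on an unwarranted assumption about the corollary; the deletion trick (which is also why part (i) appears in the lemma at all) is what turns it into a proof.
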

\begin{proof}
Let $l'$ be a leaf of $T-l$ and let $T^\prime=T-l-l^\prime$. It follows that $T^\prime$ has at least $k$ vertices and therefore at least one $k$-colouring.  There are $k-1$ extensions of this colouring to a $k$-colouring of $T-l$ and therefore $N=|V(S_k(T-l))|\ge k-1$.  This proves {\em (i)}.

For $1 \leq i \leq N-1$, the colouring $f_i$ has exactly $k-1$ extensions: the vertex $l$ can be inserted into any cell except the one that contains its unique neighbour.  Since any two of these extensions of $f_i$ differ only in the cell containing $l$, they are adjacent in $H$. This proves {\em (ii)}  and {\em (iv)}.

Suppose that $f_i$ and $f_{i+1}$ differ in the cell containing the vertex $x$.  If $x$ is not the unique neighbour of $l$ in $T$, then each colouring $c_i \in F_i$ is adjacent in $H$ to the unique colouring $c_{i+1} \in F_{i+1}$ which is the same when restricted to $V(T) - \{x\}$.  Thus, in this case, $[F_i, F_{i+1}]$ contains $k-1 \geq 4$ vertex disjoint edges.  If $x$ is adjacent to $l$ in $T$ then, since $l$ and $x$ can not belong to the same cell, there are $k-2$ colourings in $c_i \in F_i$ for which there is a unique colouring $c_{i+1} \in F_{i+1}$ which is the same when restricted to $V(T) - \{x\}$ (the remaining colouring $a \in F_i$ also differs from the remaining colouring $b \in F_{i+1}$ in the cell containing $l$).  Thus, in this case, $[F_i, F_{i+1}]$ contains $k-2 \geq 3$ vertex disjoint edges. This proves {\em (iii)}.

Finally, let $xy$ be an edge of $H$ in the subgraph induced by $F_i$.  Let $H^\prime$ be the subgraph of $H$ induced by deleting any edge in $[F_{i-1}, F_i] \cup [F_i, F_{i+1}]$ incident with $x$. The graph $H^\prime$ is a $C$-graph.  Since all the edges which were deleted in the formation of $H'$ were incident with $x$, by  {\em (iii)} for every $0\le j\le N-1$,  the set $[F_j, F_{j+1}]$ contains at least two
vertex disjoint edges.  Furthermore, by {\em (i)}, $N\ge 3$ and therefore by  {\em (iii)} there exists $j$ such that the set $[F_j, F_{j+1}]$ contains at least three
vertex disjoint edges. All the premises in Corollary \ref{Cor:3edges} are therefore satisfied. By  Corollary \ref{Cor:3edges} the Hamilton cycle is constructed by finding, for $i = 0, 1, \ldots, N-1$, a
Hamilton paths through the subgraph induced by $F_0, F_1, \ldots, F_{N-1}$ that begin at an end of a specific edge in $[F_{i-1}, F_i]$ and terminate at an end of a specific edge in $[F_i, F_{i+1}]$.  By construction of $H^\prime$, the vertex $x$ is neither the start nor end of the Hamilton path in the subgraph induced by $F_i$.  Since the subgraph induced by $F_i$ is a complete graph, the edge $xy$ can be included in the Hamilton path  that is used.
\end{proof}

We now outline the main idea in the the proof of the next theorem, and then formalize it in a technical lemma (Lemma \ref{LemmaGluePath}). The argument uses of the same sort of construction as the proof of Theorem \ref{Thm$S_4$}.  Let $k \geq 5$ and let $T$ be a tree on at least $k+2$ vertices.  Let $l$ be a leaf of $T$.  In a $k$-colouring of $T$ either the vertex $l$ belongs to a cell of size at least two, or it belongs to a cell of size one.   In the former case, deleting $l$ gives a $k$-colouring of $T-l$ and, for each $k$-colouring of $T-l$, the vertex $l$ can be inserted into any cell that does not contain its unique neighbour to obtain a $k$-colourings of $T$.  In the latter case, deleting the cell containing $l$ gives a $(k-1)$-colouring of $T-l$, and each $(k-1)$-colouring of $T-l$ can be uniquely extended to a $k$-colouring of $T$ by inserting a cell containing only $l$.
A Hamilton cycle in $S_k(T)$ is constructed from a Hamilton cycle $C$ in $S_{k}(T-l)$ and a Hamilton path in $S_{k-1}(T-l)$ whose ends are joined to consecutive vertices of $C$.

\begin{lemma}
Let $T$ be a tree on at least $k+2 \geq 7$ vertices, let $l$ be a leaf of $T$, and let $s$ be the unique neighbour of $l$ in $T$.  Suppose $S_{k-1}(T-l)$ has a Hamilton path with ends 
$$\alpha = \left\{ X_1 \cup \{w\}, X_2, X_3, \ldots, X_{k-2}, \{y\} \right\},\quad \mathrm{and} \quad
\beta = \left\{ X_1, X_2, X_3, \ldots, X_{k-2}, \{w, y\} \right\},$$
where neither $w$ nor $y$ equals $s$.  
Then, if $S_{k}(T-l)$ has a Hamilton cycle, then so does $S_k(T)$.
\label{LemmaGluePath}
\end{lemma}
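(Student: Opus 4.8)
The plan is to split $V(S_k(T))$ according to the role of the leaf $l$, exactly as in the outline preceding the statement. Let $U$ be the set of $k$-colourings of $T$ in which $\{l\}$ is a cell, and let $W$ be the set of $k$-colourings in which $l$ lies in a cell of size at least two; these partition $V(S_k(T))$. Adjoining the cell $\{l\}$ to a $(k-1)$-colouring of $T-l$ is a bijection from $V(S_{k-1}(T-l))$ onto $U$, and it is easily seen to be an isomorphism onto the subgraph of $S_k(T)$ induced by $U$: two $(k-1)$-colourings agree off a single vertex exactly when the two extensions (both carrying the extra cell $\{l\}$) do. Hence the hypothesised Hamilton path of $S_{k-1}(T-l)$ lifts to a Hamilton path $P$ of the subgraph induced by $U$, running from $\alpha' = \alpha \cup \{\{l\}\}$ to $\beta' = \beta \cup \{\{l\}\}$.

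For $W$ I would invoke Lemma~\ref{LemmaC-graph}, applied to the given Hamilton cycle $f_0, f_1, \ldots, f_{N-1}$ of $S_k(T-l)$. Since this cycle visits every vertex of $S_k(T-l)$, the union of the parts $F_i$ (the extensions of $f_i$ to a $k$-colouring of $T$) is precisely $W$, so the graph $H$ of that lemma is exactly the subgraph of $S_k(T)$ induced by $W$. By Lemma~\ref{LemmaC-graph}, $H$ is a $C$-graph, each $F_i$ is complete on $k-1 \geq 4$ vertices, and part (v) supplies, for any prescribed edge lying inside a single part $F_i$, a Hamilton cycle of $H$ through that edge.

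The heart of the argument is to produce two ``gluing'' edges from $\alpha'$ and $\beta'$ into one common part of $H$. I would take
$$a = \{X_1, X_2, \ldots, X_{k-2}, \{y\}, \{w, l\}\}, \qquad b = \{X_1, X_2, \ldots, X_{k-2}, \{w\}, \{y, l\}\}.$$
Because $w \neq s$ and $y \neq s$, neither $w$ nor $y$ is adjacent to $l$, so $\{w,l\}$ and $\{y,l\}$ are independent and both $a$ and $b$ are genuine $k$-colourings of $T$ lying in $W$. Deleting $l$ from either one yields the same $k$-colouring $f = \{X_1, \ldots, X_{k-2}, \{w\}, \{y\}\}$ of $T-l$, which is a vertex of $S_k(T-l)$ and hence some $f_i$; thus $a$ and $b$ lie in the same part $F_i$, and since $F_i$ is complete, $ab$ is an edge of $H$. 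Finally, noting that $X_1 \neq \emptyset$ (as $\beta$ is a genuine $(k-1)$-colouring), a direct check shows $\alpha'$ and $a$ agree off the single vertex $w$, and $\beta'$ and $b$ agree off the single vertex $y$, so $\alpha' a$ and $\beta' b$ are edges of $S_k(T)$.

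It then remains only to assemble the cycle. Using part (v) of Lemma~\ref{LemmaC-graph}, choose a Hamilton cycle of $H$ through $ab$ and delete the edge $ab$ to obtain a Hamilton path of $H$ from $a$ to $b$. Splicing this together with the edge $a\alpha'$, the path $P$ from $\alpha'$ to $\beta'$, and the edge $\beta' b$ gives a cycle through every vertex of $U \cup W = V(S_k(T))$, as required. I expect the only genuine obstacle to be the bookkeeping of the previous paragraph: one must choose $a$ and $b$ so that they simultaneously absorb the vertices $w$ and $y$ into the cell of $l$ \emph{and} restrict to a common colouring $f$ of $T-l$, so that the single edge $ab$ falls inside one part $F_i$ and part (v) applies; the hypotheses $w,y \neq s$ are precisely what guarantee the two independence conditions on $\{w,l\}$ and $\{y,l\}$.
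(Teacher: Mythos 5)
Your proposal is correct and follows essentially the same route as the paper's own proof: the same split of $V(S_k(T))$ into colourings where $l$ is, or is not, a singleton cell, the same gluing vertices $a$ and $b$ lying in a common part $F_i$ (the paper simply takes that part to be $F_0$ without loss of generality), and the same appeal to part \emph{(v)} of Lemma~\ref{LemmaC-graph} to obtain a Hamilton cycle of $H$ through the edge $ab$, which is then replaced by $a\alpha'$, the path $P$, and $\beta' b$. The small verifications you add (independence of $\{w,l\}$ and $\{y,l\}$ from $w,y \neq s$, and $X_1 \neq \emptyset$ so that $a,b$ restrict to a genuine vertex of $S_k(T-l)$) are details the paper leaves implicit.
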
 
\begin{proof}
Since there is a 1-1 correspondence between the set of $k$-colourings of $T$ in which $l$ belongs to a cell of size one and the  set of $(k-1)$-colourings of $T-l$,  there is a path $P$ in $S_k(T)$ from 
$$\alpha^\prime = \left\{ X_1 \cup \{w\}, X_2, X_3, \ldots, X_{k-2}, \{y\}, \{l\} \right\},\quad \mathrm{to} \quad
\beta^\prime = \left\{ X_1, X_2 , X_3, \ldots, X_{k-1}, \{w, y\}, \{l\} \right\}$$
which contains all of the $k$-colourings where $l$ belongs to a cell of size one.

Let $f_0, f_1, \ldots, f_{N-1}$ be a Hamilton cycle in $S_k(T-l)$ where, without loss of generality, 
$$f_0 =  \left\{ X_1, X_2, X_3, \ldots, X_{k-2}, \{y\}, \{w\} \right\}.$$

For $i = 0, 1, \ldots, N-1$, let $F_i$ be the set of extensions of $f_i$ to a $k$-colouring of $T$.  
Then 
$$a = \left\{ X_1, X_2, X_3, \ldots, X_{k-2}, \{y\}, \{w, l\} \right\},\quad \mathrm{and}\quad
b = \left\{ X_1, X_2, X_3, \ldots, X_{k-2}, \{w\}, \{y, l\} \right\}$$
both belong to $F_0$.
Furthermore both $a \alpha^\prime$ and $b \beta^\prime$ are edges of $S_k(T)$.
The colourings in $F_0 \cup F_1 \cup \cdots \cup F_{N-1}$ are exactly the $k$-colourings of $T$ in which $l$ belongs to a cell of size at least two.
Let $H$ be the subgraph of $S_k(T)$ induced by $F_0 \cup F_1 \cup \cdots \cup F_{N-1}$.
By Lemma \ref{LemmaC-graph}, there is a Hamilton cycle $C$ in $H$ that contains the edge $ab$.

The desired Hamilton cycle in $S_k(T)$ arises from replacing the edge $ab$ of $C$ by the edge $a \alpha^\prime$, followed by the path $P$ and then the edge $\beta^\prime b$.
\end{proof}

\begin{theorem}
Let $r \geq 4$.  If $T$ is a tree on at least $r+1$ vertices, then $S_r(T)$ is Hamiltonian.
\end{theorem}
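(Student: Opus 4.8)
The plan is a single induction on $n=|V(T)|$, carrying a hypothesis strong enough to feed Lemma~\ref{LemmaGluePath}. For every admissible pair $(r,T)$ with $r\geq4$ and $|V(T)|\geq r+1$ I would assert that $\mathcal{S}_r(T)$ is Hamiltonian, and, when moreover $|V(T)|\geq r+2$, that for each prescribed vertex $s$ the graph $\mathcal{S}_r(T)$ has a Hamilton path whose ends have the glue shape $\alpha=\{X_1\cup\{w\},X_2,\dots,X_{r-1},\{y\}\}$ and $\beta=\{X_1,X_2,\dots,X_{r-1},\{w,y\}\}$ with $w,y\neq s$; equivalently, a Hamilton cycle through a glue-form edge avoiding $s$. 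Two families anchor the induction: the $r=4$ case holds for every tree on at least five vertices by the theorem established above, and the case $r=n-1$ of a tree on exactly $r+1$ vertices is Lemma~\ref{LemmaBaseCase}; at the very bottom of the path assertion, Lemma~\ref{newB3} supplies the corresponding $\mathcal{S}_3$ path.

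For the Hamiltonicity half of the inductive step, fix $5\leq r\leq n-2$ and a tree $T$ on $n$ vertices, choose a leaf $l$ with neighbour $s$, and set $T'=T-l$, a tree on $n-1\geq r+1$ vertices. Since $n-1\geq (r-1)+2$, the induction hypothesis applied to the smaller tree $T'$ gives both a Hamilton cycle of $\mathcal{S}_r(T')$ and a glue-form Hamilton path in $\mathcal{S}_{r-1}(T')$ whose two distinguished vertices avoid $s$. These are exactly the two inputs required by Lemma~\ref{LemmaGluePath} with $k=r$, which then produces a Hamilton cycle of $\mathcal{S}_r(T)$. The bounds $r\geq5$ and $n\geq r+2$ are what make Lemmas~\ref{LemmaGluePath} and~\ref{LemmaC-graph} applicable, as both require at least $k+2\geq7$ vertices.

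For the glue-path half of the step I would reuse the $C$-graph machinery. Starting from a Hamilton cycle of $\mathcal{S}_r(T')$, build the graph $H$ of Lemma~\ref{LemmaC-graph} by extending each colouring across $l$, so that each fibre $F_i$ is a clique $K_{r-1}$. Choosing the deleted leaf $l\neq s$ with neighbour $p$, pick a fibre whose base colouring of $T'$ has a singleton cell $\{y\}$ with $y\notin\{p,s\}$ and some other cell not containing $p$; the two extensions that place $l$ into that cell and into $\{y\}$ form a glue-form edge $\alpha\beta$ whose moving vertex is $l$. This edge lies inside a single fibre, so Lemma~\ref{LemmaC-graph}(v) yields a Hamilton cycle of $H$ through $\alpha\beta$; splicing in the seam of colourings in which $\{l\}$ is its own cell, while attaching the seam so as to leave $\alpha\beta$ intact, gives a Hamilton cycle of $\mathcal{S}_r(T)$ containing $\alpha\beta$, and deleting $\alpha\beta$ yields the desired glue-form path with $w=l$ and singleton $y$, both distinct from $s$.

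The main obstacle is the endpoint bookkeeping that keeps this recursion consistent: guaranteeing that the glue-form path produced at level $(r,T)$ places its distinguished vertices exactly where Lemma~\ref{LemmaGluePath} needs them one level up, and that the seam can always be grafted on without consuming the chosen edge $\alpha\beta$. The most delicate case is the base of the path assertion at $\mathcal{S}_4$, which lies outside the range $k\geq5$ of Lemmas~\ref{LemmaGluePath} and~\ref{LemmaC-graph}; it must be obtained by reopening the construction behind the $\mathcal{S}_4$ theorem and tracking the ends of the Hamilton cycle it builds, and by reconciling the endpoints furnished by Lemma~\ref{newB3}---which pull out two \emph{different} singletons---with the glue shape, in which a single vertex slides between two fixed cells. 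Once the glue-form path is available at every level $r\geq4$, the induction closes, and the residual verifications---that a fibre with a free singleton cell exists and that the splicing preserves a spanning cycle---are routine applications of Lemmas~\ref{LemmaC-graph} and~\ref{LemmaGluePath}.
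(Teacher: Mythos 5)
Your skeleton---double induction on $r$ and $|V(T)|$, Lemma~\ref{LemmaGluePath} as the gluing engine, Theorem~\ref{Thm$S_4$} and Lemma~\ref{LemmaBaseCase} as anchors, and a carried glue-form-path assertion---is the same as the paper's, and your Hamiltonicity half is fine. The gap lies in your mechanism for producing the glue-form path, and it is not mere bookkeeping. First, to splice the seam into the Hamilton cycle of $H$ you must replace the specific edge $ab$ of the fibre $F_0$ (its location is dictated by the ends of the $(r-1)$-level glue path, as in the proof of Lemma~\ref{LemmaGluePath}) while simultaneously keeping your chosen glue edge $\alpha\beta$; so you need a Hamilton cycle of $H$ through \emph{two} prescribed edges. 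Lemma~\ref{LemmaC-graph}(v) delivers only one, and its proof (which routes through Corollary~\ref{Cor:3edges} and constrains the endpoints of the Hamilton path inside each fibre) does not extend to two edges without genuine extra work. Second, your induction hypothesis---avoidance of one arbitrary prescribed vertex $s$---cannot propagate: the machinery of Lemma~\ref{LemmaGluePath} already forces the lower-level glue path to avoid $p$, the neighbour of the deleted leaf, so at each level you actually need avoidance of \emph{two} vertices ($p$ and your $s$), and at the base $r=4$ this would require a two-vertex version of Lemma~\ref{newB3}, which does not exist; you flag this yourself but it is the crux, not a residual verification.

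The paper avoids both problems with a different device, which is also the natural repair for your argument. Its induction hypothesis is structural: the Hamilton cycle of $S_r(T)$ is itself \emph{constructed as in} Lemma~\ref{LemmaGluePath} (or Theorem~\ref{Thm$S_4$}) from a Hamilton path in $S_{r-1}(T-l)$ and a Hamilton cycle in $S_r(T-l)$. The glue-form path needed one level up is then obtained by deleting from the structured cycle $C$ in $S_{k-1}(T-l)$ an edge joining the seam to the rest: such a seam-attachment edge \emph{automatically} has the glue shape (one end has the second deleted vertex as a singleton cell, the other slides a vertex $w$ into that cell), so no second prescribed edge is ever needed. Moreover, the avoidance of $s$ is not quantified over all vertices but engineered: taking $l$ to be an end of a longest path of $T$, either $s$ has a sibling leaf $l'$ in $T-l$ (Case 1, where the avoidance forced on $w$ by the lower level---avoid the neighbour of $l'$---is exactly avoidance of $s$), or $s$ itself is a leaf of $T-l$ (Case 2, where the glue edge is taken inside the seam, in which $\{s\}$ is a singleton cell, so its distinguished vertices avoid $s$ trivially). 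If you rewrite your glue-path half so that the chosen edge $\alpha\beta$ \emph{is} the seam-attachment edge, and weaken your prescribed-vertex assertion to vertices that are leaves or neighbours of leaves, your construction collapses into the paper's proof.
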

\begin{proof}
We prove the stronger statement, that if $T$ is a tree on at least $r+1$ vertices and $l$ is a leaf of $T$, then $S_r(T)$ has a Hamiltonian cycle constructed as in Lemma~\ref{LemmaGluePath} or Theorem~\ref{Thm$S_4$} from a Hamilton path in $S_{r-1}(T-l)$ and a
Hamilton cycle in $S_{r}(T-l)$. 

The statement holds for $r=4$ by Theorem \ref{Thm$S_4$}.  
For the induction hypothesis suppose, for some $k \geq 5$ and all $r$ such that $4 \leq r < k$,
that 
if $T$ is a tree with at least $r +1$ vertices, then $S_{r}(T)$ has a Hamiltonian cycle constructed from a Hamilton path in $S_{r-1}(T-l)$ and a
Hamilton cycle in $S_{r}(T-l)$ as in Lemma~\ref{LemmaGluePath} or Theorem~\ref{Thm$S_4$}.  
We need to show the statement to be proved holds when $r = k$, and do so
by induction on the number of vertices in the tree. 

The basis, when $|V(T)| = k+1$, 
was established in Lemma \ref{LemmaBaseCase}. We note that in this case, for any leaf $l$ of $T$,  $S_{k}(T-l) \cong K_1$. Hence any Hamilton cycle  in $S_k(T)$ is  constructed from the only vertex in $S_{k}(T-l)$ and a Hamilton path in $S_{k-1}(T-l)$.  
Suppose that the statement holds for all trees on between $k+1$ and $n-1$ vertices, and let $T$ be a tree on $n$ vertices.  Note that $n \geq k+2 \geq 7$.

Let $l$ be an end of a longest path in $T$. Then $l$ is a leaf.
Let $s$ be its unique neighbour.   The vertex $s$ either has degree 2, or is adjacent to another leaf $l^\prime \neq l$.  We consider these two possibilities.

\smallskip \noindent
{\bf Case 1}. {\em The vertex $s$ is adjacent to another leaf $l^\prime \neq l$.}

By the induction hypotheses (both), there exists a Hamilton cycle in $S_k(T-l)$ and a Hamilton cycle $C$ in $S_{k-1}(T-l)$.  

The Hamilton cycle $C$ can be assumed to be constructed as in Lemma~\ref{LemmaGluePath} or Theorem~\ref{Thm$S_4$} from a Hamilton path $A$ in $S_{k-2}(T-l-l^\prime)$ and a
Hamilton cycle $B$ in $S_{k-1}(T-l-l^\prime)$.  Let $\alpha\beta$ be an edge of $C$ that joins a vertex in $A$ to a vertex in $B$.  Then, there is a vertex $w \neq s$ such that
$$
\alpha  =  \left\{X_1, X_2, \ldots, X_{k-1}, \{l^\prime\}\right\},\quad \mathrm{and}\quad 
\beta  =  \left\{X_1 - \{w\}, X_2, \ldots, X_{k-1}, \{l^\prime, w\}\right\}.
$$
It follows that $S_{k-1}(T-l)$ has a Hamilton path from $\alpha$ to $\beta$.

Since neither $w$ nor $l^\prime$ equals $s$, the existence of the desired Hamilton cycle in $S_k(T)$ follows from Lemma \ref{LemmaGluePath}.

\smallskip \noindent
{\bf Case 2}. {\em The vertex $s$ has degree two.}

Again, by the induction hypotheses (both), there exists a Hamilton cycle in $S_k(T-l)$ and a Hamilton cycle $C$ in $S_{k-1}(T-l)$. The Hamilton cycle $C$ can be assumed to be constructed as in Lemma \ref{LemmaGluePath} from a Hamilton path $A$ in $S_{k-2}(T-l-s)$ and a Hamilton cycle $B$ in $S_{k-1}(T-l-s)$. 

 Since $k \geq 5$ we have $k-4 \geq 1$. Let $x_1, x_2, \ldots, x_{k-4}$ be vertices of $T-l-s$ such that $T^\prime = T - \left\{l, s, x_1, x_2, \ldots, x_{k-4}\} \right\}$ is a tree. 
Recall that $T$ has at least $k+1 \geq 6$ vertices, so that $T'$ has at least 3 vertices. 
Let  $\{X, Y\}$ be a 2-colouring of $T^\prime$ such that $|Y| \geq 2$. 
Let $$\alpha = \left\{ X, Y, \{x_1\}, \{x_2\}, \ldots, \{x_{k-4}\}, \{s\} \right\}.$$
Note that $\alpha$ is a colouring of $T - l$.
Since $T-l$ and $T'$ are trees, 
at most one vertex of $Y$ can be adjacent to $x_1$.
Hence, 
for some $w \in Y$, the vertex $\alpha$ has a neighbour of the form
$$\beta =  \left\{ X, Y-\{w\}, \{x_1, w\}, \{x_2\}, \ldots, \{x_{k-4}\}, \{s\} \right\}$$
in the collection of colourings that extend those
on  the Hamilton path $A$ to $T - l$. 
Note that $\{s\}$ is a cell in each of these.
It follows that  $\alpha$ and $\beta$  are neighbours on $C$ and therefore, $S_{k-1}(T - l)$ has a Hamilton path from $\alpha$ to $\beta$.

Since  neither $w$ nor $x_1$ equals $s$, the existence of the desired Hamilton cycle in $S_k(T)$ now follows from Lemma \ref{LemmaGluePath}.

\end{proof}

%
%
%

\end{document}